\newcommand{\bb}[1]{\mathbb{#1}}
\newcommand{\F}[1]{\bb{F}_{#1}}
\newcommand{\pomegap}[2]{{\sf P}\Omega^+_{#1}(#2)}
\newcommand{\grid}[2]{(#1 \times #2){\sf -grid}}
\newcommand{\gridcomp}[1]{\overline{\grid{2}{#1}}}
\DeclareMathOperator{\VGamma}{V\Gamma}
\DeclareMathOperator{\PSL}{PSL}
\DeclareMathOperator{\Sp}{Sp}
\DeclareMathOperator{\Soc}{Soc}
\DeclareMathOperator{\K}{K}
\DeclareMathOperator{\Aut}{Aut}
\DeclareMathOperator{\Hamming}{H}
\DeclareMathOperator{\diam}{diam}
\newtheorem{theorem}{Theorem}[section]
\newtheorem{corollary}[theorem]{Corollary}
\newtheorem{lemma}[theorem]{Lemma}
\newtheorem{proposition}[theorem]{Proposition}
\theoremstyle{definition}
\newtheorem{definition}[theorem]{Definition}
\newtheorem{condition}[theorem]{Condition}
\renewcommand{\leq}{\leqslant}
\renewcommand{\geq}{\geqslant}
\title{Finite 2-distance transitive graphs}
\author[B.~Corr]{Brian P. Corr}
\author{Wei Jin}
\author{Csaba Schneider}
\address[B.\ Corr, W.\ Jin and C.\ Schneider]
{Departamento de Matem\'{a}tica\\
Instituto de Ci\^{e}ncias Exatas\\
Universidade Federal de Minas Gerais\\
Av. Ant\^{o}nio Carlos, 6627, 31270-901\\
Belo Horizonte, MG, Brazil}
\address[W.\ Jin]{School of Statistics\\
  Jiangxi University of Finance and Economics\\
 Nanchang, Jiangxi, 330013, P.R.China}
\address[W.\ Jin]{Research Center of Applied Statistics\\
  Jiangxi University of Finance and Economics\\
  Nanchang, Jiangxi, 330013, P.R.China}
\email{brian.p.corr@gmail.com, csaba@mat.ufmg.br, jinweipei82@163.com}
\subjclass[2010]{05E18; 20B25}
\keywords{2-distance transitive graph; 2-arc transitive graph; permutation group}
\begin{document}
\maketitle

\begin{abstract}
A  non-complete graph    $\Gamma$ is said to be $(G,2)$-distance
transitive if $G$ is a subgroup of the automorphism group of $\Gamma$ that
is transitive on the vertex set of $\Gamma$, and
for any vertex $u$ of $\Gamma$, the stabilizer $G_u$ is transitive
on the sets of vertices at distance~1 and~2 from $u$. This paper
investigates the family of $(G,2)$-distance transitive graphs that
are not $(G,2)$-arc transitive. Our main result is the classification
of such graphs of valency not greater than~5.
\end{abstract}

\section{Introduction}

Graphs that satisfy certain symmetry conditions have been a focus of research
in algebraic graph theory. We usually measure the degree of symmetry of
a graph by studying if the automorphism group is transitive on certain
natural sets formed by combining vertices and edges.
For instance,
$s$-arc transitivity requires that the automorphism group should be
transitive on the set of $s$-arcs (see Section~\ref{sect:def} for precise definitions).  The class of
$s$-arc transitive graphs have been  studied intensively,
beginning with the seminal result of Tutte
\cite{Tutte-1} that cubic $s$-arc transitive graphs must
have $s\leq 5$.  Later, in 1981, Weiss \cite{weiss}, using the
finite simple group classification, showed that  there are no
$8$-arc transitive graphs of valency at least 3. For a survey on
$s$-arc transitive graphs, see~\cite{seress}.

Recently, several papers have considered conditions on undirected
graphs that are similar to, but weaker than,
$s$-arc transitivity. For examples of such conditions, we mention local $s$-arc transitivity, 
local $s$-distance transitivity, $s$-geodesic transitivity, and
$2$-path transitivity.
Devillers et al.~\cite{locallysdist}
studied the class of locally $s$-distance transitive graphs, 
using
the normal quotient strategy
developed for $s$-arc transitive graphs in~\cite{Praeger-4}.
The condition of $s$-geodesic transitivity was investigated in
several papers~\cite{DJLP-2,DJLP-prime,DJLP-compare}.
A characterization of $2$-path transitive, but not
$2$-arc transitive graphs was given by  Li and Zhang~\cite{LZ-2path-2013}.

In this paper we study the class of $2$-distance transitive graphs.
If $G$ is a subgroup of the automorphism group of a graph $\Gamma$, then
$\Gamma$ is said to be $(G,2)$-distance transitive if $G$ acts
transitively on the vertex set of $\Gamma$, and a vertex stabilizer
$G_u$ is transitive on the neighborhood $\Gamma(u)$ of $u$ and on the second
neighborhood $\Gamma_2(u)$ (see Section~\ref{sect:def}).
The class of $(G,2)$-distance transitive graphs is larger than
the class of $(G,2)$-arc transitive graphs, and in this paper we
study the $(G,2)$-distance transitive graphs that are not $(G,2)$-arc
transitive.

Our first theorem links the structure of $(G,2)$-distance transitive, but not
$(G,2)$-arc transitive graphs to their valency and the
value of the constant $c_2$ in the intersection array (see Definition
\ref{definition: intersectionarray}).

\begin{theorem}\label{thm:valency stuff}
  Let $\Gamma$ be a connected $(G,2)$-distance transitive,
  but not $(G,2)$-arc transitive graph of girth $4$ and valency $k\geq 3$.
  Then $2\leq c_2\leq k-1$ and the following are valid.
  \begin{enumerate}
  \item  If $c_2=k-1$, then $\Gamma \cong \gridcomp{(k+1)}$ and
    $G$ satisfies Condition~\ref{gridcondition}.
\item  If $c_2=2$, then $k$ is a prime-power such
  that $k\equiv 3 \pmod 4$ and $G_u$ acts $2$-homogeneously,
  but not $2$-transitively on $\Gamma(u)$ for each $u\in V\Gamma$.
  \end{enumerate}

\end{theorem}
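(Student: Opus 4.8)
The plan is to fix a vertex $u$, read off the local parameters forced by girth $4$, and then treat the two extremal values of $c_2$ separately. Girth $4$ means there are no triangles, so $a_1=0$ and $\Gamma(u)$ is an independent set, while the presence of a $4$-cycle produces a pair of vertices at distance $2$ with two common neighbours; as $G_u$ is transitive on $\Gamma_2(u)$, the quantity $c_2$ is a well-defined constant and $c_2\geq 2$. The engine for both cases is a double count of the edges between $\Gamma(u)$ and $\Gamma_2(u)$: since no triangle exists, every $v\in\Gamma(u)$ has all $k-1$ of its neighbours other than $u$ lying in $\Gamma_2(u)$, so $|\Gamma_2(u)|\,c_2=k(k-1)$ and hence $|\Gamma_2(u)|=k(k-1)/c_2$.

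For the upper bound I would rule out $c_2=k$. If $c_2=k$, then every $w\in\Gamma_2(u)$ is joined to all of $\Gamma(u)$, forcing $\Gamma(w)=\Gamma(u)$, whence $\Gamma\cong K_{k,k}$ with the two parts forming a $G$-invariant block system. The point is then to show that such a graph, when $(G,2)$-distance transitive, is automatically $(G,2)$-arc transitive, contradicting the hypothesis. Writing $G^+$ for the index-$2$ subgroup preserving the two parts, the transitivity of $G_u$ on $\Gamma_2(u)$ (the remainder of $u$'s part) upgrades the transitivity of $G^+$ on each part to $2$-transitivity; since a point stabiliser of a $2$-transitive group is maximal, a Goursat analysis of the subdirect product $G^+\leq G^{A}\times G^{B}$ then forces $G_u^{\Gamma(u)}$ to be $2$-transitive. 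I expect the exclusion of $c_2=k$ to be the main obstacle, because the ``diagonal'' subdirect products need the structure of finite $2$-transitive groups to be ruled out.

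For case (i), $c_2=k-1$, the count gives $|\Gamma_2(u)|=k$, and $w\mapsto f(w)$, where $f(w)$ is the unique neighbour of $u$ not adjacent to $w$, is a $G_u$-equivariant bijection $\Gamma_2(u)\to\Gamma(u)$. Since $a_1=0$, no two vertices of $\Gamma_2(u)$ are adjacent (they share $k-2\geq 1$ common neighbours, impossible for adjacent vertices), so each $w\in\Gamma_2(u)$ has exactly one neighbour at distance $3$ from $u$. Now take a distance-$3$ vertex $z$: for each of the $k-1\geq 2$ vertices $v\in\Gamma(u)$ at distance $2$ from $z$, all $c_2=k-1$ common neighbours of $z$ and $v$ lie in $\Gamma_2(u)$, which forces $z$ to be joined to \emph{every} vertex of $\Gamma_2(u)$. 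Hence the distance-$3$ vertex is unique and adjacent to all of $\Gamma_2(u)$; counting yields $2(k+1)$ vertices whose adjacencies are exactly those of $K_{k+1,k+1}$ with a perfect matching removed, so $\Gamma\cong\gridcomp{(k+1)}$. A direct check that the induced action satisfies Condition~\ref{gridcondition} finishes this case.

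For case (ii), $c_2=2$, the count gives $|\Gamma_2(u)|=\binom{k}{2}$, and $\Delta\colon w\mapsto\Gamma(u)\cap\Gamma(w)$ maps $\Gamma_2(u)$ into the $2$-subsets of $\Gamma(u)$. This map is injective: if $\Delta(w)=\Delta(w')=\{v,v'\}$ with $w\neq w'$, then the distance-$2$ pair $v,v'$ would acquire the three common neighbours $u,w,w'$, contradicting $c_2=2$. As domain and codomain both have size $\binom{k}{2}$, the map $\Delta$ is a bijection, so transitivity of $G_u$ on $\Gamma_2(u)$ is equivalent to transitivity of $L:=G_u^{\Gamma(u)}$ on the $2$-subsets of $\Gamma(u)$, that is, $L$ is $2$-homogeneous; and since $\Gamma$ is not $(G,2)$-arc transitive, $L$ is not $2$-transitive. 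That $k$ is a prime power with $k\equiv 3\pmod 4$ then follows from the classical description of finite $2$-homogeneous but not $2$-transitive groups, namely that they have prime-power degree $q\equiv 3\pmod 4$ and lie in $\AGammaL(1,q)$.
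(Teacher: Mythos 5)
Your route coincides with the paper's in all essentials: the same double count $k(k-1)=c_2\,|\Gamma_2(u)|$ (Lemma~\ref{lem:sizeofgamma2}); for $c_2=2$, the same bijection $w\mapsto\Gamma(w)\cap\Gamma(u)$ onto the $2$-subsets of $\Gamma(u)$ followed by Kantor's theorem on $2$-homogeneous, not $2$-transitive groups (Lemma~\ref{lem:c_2=2}); and for $c_2=k-1$, the same local reconstruction of $\gridcomp{(k+1)}$ (Lemma~\ref{lem:c_2=k}(ii)), with the group-theoretic statement deferred to the straightforward verification that is the paper's Lemma~\ref{lem:gridcomp}. Your independence argument for $\Gamma_2(u)$ is in fact a small improvement: two adjacent vertices of $\Gamma_2(u)$ would share at least $2(k-1)-k=k-2\geq 1$ common neighbours in $\Gamma(u)$, creating a triangle, whereas the paper argues via the $G_u$-transitivity of $\Gamma_2(u)$. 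Cases (i) and (ii) of your sketch are sound.

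The genuine gap is in the exclusion of $c_2=k$. Deriving $\Gamma\cong\K_{k,k}$ is fine, but your claimed mechanism --- that maximality of a point stabilizer of a $2$-transitive group plus a Goursat analysis ``forces $G_u^{\Gamma(u)}$ to be $2$-transitive'' --- is not a valid deduction. What Goursat gives (using that the $2$-transitive group $M$ induced on a part has a unique minimal normal subgroup $N$) is a dichotomy: either $G_0\geq N\times N$, in which case a short computation shows $G_u$ induces all of $M$ on $\Gamma(u)$; or $G_0$ is a full diagonal $\{(t,\alpha(t))\mid t\in M\}$ with $\alpha\in\Aut M$. In the diagonal case, transitivity of $G_u=\{(t,\alpha(t))\mid t\in H\}$, where $H=M_1$, yields the factorization $M=H\alpha(H)$, and no maximality argument tells you that the transitive degree-$m$ action of $\alpha(H)\cong H$ is $2$-transitive. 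Nor is the diagonal case ``ruled out'': it genuinely occurs, e.g.\ $M=A_6$ with $\alpha$ an exceptional outer automorphism carrying a point stabilizer $A_5$ to a transitive $\PSL(2,5)$, where indeed $A_6=A_5\cdot\alpha(A_5)$. The paper's Lemma~\ref{lem:complete bipartite} closes this case only by invoking Baumeister's classification~\cite{baum} of factorizations of primitive groups with a point stabilizer as one factor, which restricts $M$ to the affine group $(\F{2})^3\rtimes\PSL(3,2)$ of degree $8$ or to groups with socle $A_6$ or $M_{12}$ (the candidates with socle $\pomegap 8q$ or $\Sp(4,q)$ admit no $2$-transitive action), and then checks case by case that the resulting actions --- $\PSL(3,2)$ on $8$ points, $A_5$ or $S_5$ on $6$ points, $M_{11}$ on $12$ points --- are all $2$-transitive, so that $\Gamma$ would be $(G,2)$-arc transitive after all. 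You correctly flagged this step as the main obstacle, but a complete proof requires this classification input and the ensuing verification; maximality alone does not deliver it.
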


The following corollary is a characterization of the family of connected $(G,2)$-distance transitive, but not $(G,2)$-arc transitive graphs of  girth $4$ and prime valency.

\begin{corollary}\label{thm:primeval}
  Let $\Gamma$ be a connected $(G,2)$-distance transitive, but not $(G,2)$-arc transitive graph of  girth $4$ and prime valency $p$, and let $u\in V \Gamma$.
  Then the following are valid.
  \begin{enumerate}
  \item  Either $\Gamma\cong \overline{\grid 2{(p+1)}}$, or    $c_2 |p-1$ and $2\leq c_2\leq (p-1)/2$.
    \item 
      If $ c_2=2$, then $p\equiv 3 \pmod 4$ and $G_u$ is $2$-homogeneous, but not $2$-transitive  on $\Gamma(u)$.
      \item If $ c_2=(p-1)/2$, then $|\Gamma_2(u)|=2p$, and  $G_u$
        is imprimitive on  $\Gamma_2(u)$.
        \end{enumerate}
\end{corollary}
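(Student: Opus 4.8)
The plan is to derive all three parts by combining Theorem~\ref{thm:valency stuff} (applied with $k=p$) with the one special feature of prime valency: the permutation group $\bar G:=G_u^{\Gamma(u)}$ induced by $G_u$ on the neighbourhood $\Gamma(u)$ has \emph{prime} degree $p$. Since $\Gamma$ is not $(G,2)$-arc transitive, $\bar G$ is transitive but not $2$-transitive, so by Burnside's classical theorem on transitive groups of prime degree it is contained in the affine group $\AGL(1,p)$. Thus $\bar G$ is a Frobenius group whose point stabilisers are cyclic of order $e$ with $e\mid p-1$ and $e<p-1$, and whose unique Sylow $p$-subgroup is the translation group $T\cong\Z p$, regular on $\Gamma(u)$ and normal in $\bar G$. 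I would record at the outset the standard girth-$4$ edge count $c_2\,|\Gamma_2(u)|=p(p-1)$, so that $|\Gamma_2(u)|=p(p-1)/c_2$; note also $p\geq 3$, since valency $2$ cannot occur in this family.

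For part~(i) the essential new point is that $c_2\mid p-1$, which I would obtain by letting $P$ be a Sylow $p$-subgroup of $G_u$ and analysing its orbits on $\Gamma_2(u)$. As $\bar G$ is transitive of degree $p$, $P$ maps onto $T$, so $P^{\Gamma(u)}=T\neq 1$. If $P$ fixed some $w\in\Gamma_2(u)$ then $P\leq G_{uw}$; but $G_{uw}$ preserves the proper non-empty set $\Gamma(u)\cap\Gamma(w)$ (of size $2\leq c_2\leq p-1$), so $G_{uw}^{\Gamma(u)}$ is intransitive and hence, by the Frobenius structure of $\bar G$, a $p'$-group, contradicting $T\leq G_{uw}^{\Gamma(u)}$. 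Thus $P$ has no fixed points; and since $|\Gamma_2(u)|=p(p-1)/c_2<p^2$, no $P$-orbit has size $p^2$ or larger. Hence every $P$-orbit has size exactly $p$, and the number of orbits $|\Gamma_2(u)|/p=(p-1)/c_2$ is an integer, giving $c_2\mid p-1$. Combined with Theorem~\ref{thm:valency stuff}, which yields $2\leq c_2\leq p-1$ and identifies $c_2=p-1$ with $\Gamma\cong\gridcomp{(p+1)}$, this finishes~(i): if $c_2\neq p-1$, then $c_2$ is a proper divisor of the even number $p-1$, whence $c_2\leq(p-1)/2$.

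Part~(ii) is then immediate: specialising Theorem~\ref{thm:valency stuff}(2) to the prime $k=p$ gives $p\equiv 3\pmod 4$ together with the asserted $2$-homogeneous, non-$2$-transitive action of $G_u$ on $\Gamma(u)$. For part~(iii), the value $|\Gamma_2(u)|=p(p-1)/c_2=2p$ is immediate from $c_2=(p-1)/2$. To prove imprimitivity I would use the $G_u$-equivariant map $\phi\colon\Gamma_2(u)\to\mathcal S$, $w\mapsto\Gamma(u)\cap\Gamma(w)$, where $\mathcal S$ is the set of $\tfrac{p-1}{2}$-subsets of $\Gamma(u)\cong\F p$ on which $G_u$ acts through $\bar G\leq\AGL(1,p)$. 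The image $\mathcal O=\phi(\Gamma_2(u))$ is a single $\bar G$-orbit, and since no proper non-empty subset of $\F p$ is fixed by the transitive group $T$, every $T$-orbit on $\mathcal O$ has size $p$; hence $|\mathcal O|\in\{p,2p\}$. If $|\mathcal O|=2p$ then $\phi$ is a bijection, and because $T\trianglelefteq\bar G$ the two $T$-orbits on $\mathcal O$ form a $\bar G$-block system of two blocks of size $p$, which pulls back through $\phi$ to a block system for $G_u$ on $\Gamma_2(u)$. If $|\mathcal O|=p$ then $\phi$ is two-to-one and its fibres constitute a $G_u$-block system of $p$ blocks of size $2$. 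In either case $G_u$ is imprimitive on $\Gamma_2(u)$.

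I expect the genuine obstacle to be part~(iii): one must exclude primitivity of a degree-$2p$ action, and the clean route is to exhibit a concrete invariant partition rather than invoke a classification of primitive groups of degree $2p$. The equivariant map $w\mapsto\Gamma(u)\cap\Gamma(w)$ supplies such a partition, but the argument bifurcates according to whether this map is injective (blocks of size $p$, arising from the normal translation subgroup) or not (blocks of size $2$, arising from its fibres), and checking that the partition is genuinely non-trivial in both regimes is the delicate step. The divisibility $c_2\mid p-1$ underlying part~(i) is the other point needing care, but the Sylow-orbit count above keeps it short.
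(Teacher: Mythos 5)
Your proposal is correct, but in two of its three parts it takes a genuinely different route from the paper. Part (ii) is handled identically in both: it is read off from Theorem~\ref{thm:valency stuff}(ii). For the divisibility in part (i), the paper is far more economical: having reduced via Theorem~\ref{thm:valency stuff} to the case $2\leq c_2\leq p-2$ (the value $c_2=p-1$ being identified with $\gridcomp{(p+1)}$), it observes that $c_2$ divides $p(p-1)$ by Lemma~\ref{lem:sizeofgamma2} and that $c_2<p$ is coprime to the prime $p$, whence $c_2\mid p-1$ in one line; your Burnside-plus-Sylow orbit count on $\Gamma_2(u)$ proves the same divisibility correctly, but the affine structure of $G_u^{\Gamma(u)}$ is machinery this step does not require, and you overlooked the coprimality shortcut. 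The substantive divergence is part (iii): the paper invokes the Wielandt-type Lemma~\ref{val-2p-1} (a primitive, not $2$-transitive group of degree $2p$ forces $p=5$ with $G\cong A_5$ or $S_5$) and then derives a contradiction from part (ii), since $p=5$ would give $c_2=2$ while $5\not\equiv 3\pmod 4$; you instead exhibit explicit invariant partitions, namely the fibres of the equivariant map $w\mapsto\Gamma(u)\cap\Gamma(w)$ when it is $2$-to-$1$, and the pullback of the two orbit-blocks of the normal regular subgroup $T\leq G_u^{\Gamma(u)}\leq\AGL(1,p)$ when it is bijective, and your case analysis ($|\mathcal{O}|\in\{p,2p\}$ because no proper nonempty subset of $\Gamma(u)$ is $T$-invariant) is sound. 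Your route buys two things: it is classification-free apart from Burnside's classical prime-degree theorem, and it proves imprimitivity outright, whereas the paper's appeal to Lemma~\ref{val-2p-1} tacitly uses that $G_u$ is not $2$-transitive on $\Gamma_2(u)$, a hypothesis of that lemma which the paper does not verify; the paper's proof, in exchange, is only a few lines given its citations. (One cosmetic slip: when $G_u^{\Gamma(u)}=T$ the group is regular rather than Frobenius, but your argument only uses that $T$ is normal and that intransitive subgroups of $G_u^{\Gamma(u)}$ are $p'$-groups, which still holds.)
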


Finally, our third main result  determines all the possible
$(G,2)$-distance transitive, but not $(G,2)$-arc transitive
graphs of  valency at most 5.

\begin{theorem}\label{thm:small val}
Let $\Gamma$ be a connected $(G,2)$-distance transitive, but not $(G,2)$-arc transitive graph of valency $k\leq 5$. Then $\Gamma$ and $G$ must be as in one of
the rows of Table~\ref{maintable}.
\end{theorem}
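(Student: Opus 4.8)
The plan is to reduce to a short list of possibilities for the girth of $\Gamma$ and for the local action of $G_u$ on $\Gamma(u)$, and then to identify $\Gamma$ and the admissible groups $G$ case by case.

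First I would dispose of the smallest valencies. If $k=1$ then $\Gamma\cong K_2$ is complete, contrary to hypothesis, and if $k=2$ then $\Gamma$ is a cycle; since a group transitive on a $2$-element set is automatically $2$-transitive on it, a $(G,2)$-distance transitive cycle is always $(G,2)$-arc transitive, so no examples arise. Hence $k\in\{3,4,5\}$. Next I would pin down the girth $g$. A finite connected vertex-transitive graph of valency at least $2$ contains a cycle, so either $g\in\{3,4\}$ or $g\geq 5$. If $g\geq 5$, then distinct vertices at distance $2$ have a unique common neighbour, so sending a $2$-arc $(u,x,w)$ to its terminal vertex $w$ is a $G_u$-equivariant bijection from the $2$-arcs issuing from $u$ onto $\Gamma_2(u)$; transitivity of $G_u$ on $\Gamma_2(u)$ then upgrades (using vertex-transitivity to move the initial vertex) to transitivity of $G$ on all $2$-arcs, making $\Gamma$ $(G,2)$-arc transitive, a contradiction. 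Therefore $g\in\{3,4\}$.

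For $g=4$ the two cited results finish the job quickly. When $k=4$, Theorem~\ref{thm:valency stuff} gives $2\leq c_2\leq 3$; the value $c_2=2$ is excluded because it would force $4\equiv 3\pmod 4$, so $c_2=3=k-1$ and $\Gamma\cong\gridcomp{5}$. When $k$ is the prime $3$ or $5$, Corollary~\ref{thm:primeval} forces either $\Gamma\cong\gridcomp{(k+1)}$ or $2\leq c_2\leq (k-1)/2$ with $c_2\mid k-1$; for $k=3$ the latter range is empty, and for $k=5$ it leaves only $c_2=2$, again excluded by the congruence condition of Corollary~\ref{thm:primeval}(2). Thus in every girth-$4$ case $\Gamma\cong\gridcomp{(k+1)}$, and the admissible $G$ are exactly those satisfying Condition~\ref{gridcondition}. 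The substance of the proof is therefore the girth-$3$ case, which I would treat through the local graph $\Delta=\Gamma(u)$ induced on the neighbourhood of $u$. Since $G_u$ is vertex-transitive on $\Delta$, this is a regular $G_u$-vertex-transitive graph on $k$ vertices; it is non-empty because $g=3$, and it is not complete, for otherwise $\{u\}\cup\Gamma(u)$ would be a clique $K_{k+1}$ forcing $\Gamma\cong K_{k+1}$. Enumerating the regular vertex-transitive graphs on $k\leq 5$ vertices that are neither complete nor empty leaves: nothing for $k=3$ (so there are no girth-$3$ examples of valency $3$); the two graphs $C_4$ and $2K_2$ for $k=4$; and only $C_5$ for $k=5$. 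I would then reconstruct $\Gamma$ from $\Delta$ together with the intersection numbers forced by $2$-distance transitivity, obtaining $\Gamma\cong K_{2,2,2}$ (the octahedron) when $\Delta\cong C_4$, the $3\times 3$ rook's graph $L(K_{3,3})$ when $\Delta\cong 2K_2$, and the icosahedron when $\Delta\cong C_5$.

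Finally, for each of the graphs obtained (the three crown graphs $\gridcomp{(k+1)}$ and the three girth-$3$ graphs just listed) I would compute $\Aut(\Gamma)$ and determine precisely which subgroups $G$ are vertex-transitive with $G_u$ transitive on both $\Gamma(u)$ and $\Gamma_2(u)$ but not $2$-transitive on $\Gamma(u)$, recording the resulting pairs $(\Gamma,G)$ as the rows of Table~\ref{maintable}. I expect the main obstacle to lie in the girth-$3$ reconstruction step: proving that the local graph, vertex-transitivity, and the single intersection number $c_2$ suffice to recover $\Gamma$ uniquely requires careful counting (or appeal to the classification of locally-$C_n$ graphs and of small distance-regular graphs), and the subsequent determination of all admissible $G$ for each graph demands a complete analysis of the transitive subgroups of each automorphism group having the correct non-$2$-transitive local action.
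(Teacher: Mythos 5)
Your skeleton (reduce to $k\in\{3,4,5\}$, exclude girth $\geq 5$, settle girth $4$ via Theorem~\ref{thm:valency stuff} and Corollary~\ref{thm:primeval}, then analyse the local graph $[\Gamma(u)]$ in girth $3$) is essentially the paper's route, and your girth-$4$ case and the local-graph enumeration ($C_4$ or $2\K_2$ for $k=4$, only $C_5$ for $k=5$, nothing for $k=3$) are correct. However, there is a genuine gap in the girth-$3$, valency-$4$ case with $[\Gamma(u)]\cong 2\K_2$: you assert that the intersection numbers are ``forced by $2$-distance transitivity'' and that the reconstruction is unique, yielding only the rook's graph $L(\K_{3,3})\cong\Hamming(2,3)$. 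They are not forced. There are $8$ edges between $\Gamma(u)$ and $\Gamma_2(u)$, and transitivity of $G_u$ on $\Gamma_2(u)$ only gives that $|\Gamma_2(u)|$ divides $8$; combined with $|\Gamma_2(u)|\geq 4$ this leaves \emph{two} cases, $|\Gamma_2(u)|=4$ (i.e.\ $c_2=2$) and $|\Gamma_2(u)|=8$ (i.e.\ $c_2=1$). The first indeed reconstructs to $\Hamming(2,3)$, but in the second every vertex of $\Gamma_2(u)$ lies on a unique $2$-geodesic from $u$, so $2$-distance transitivity upgrades to $2$-geodesic transitivity, and by \cite[Theorem 1.3]{DJLP-2} (Lemma~\ref{2gt-val4}) $\Gamma$ is the line graph of a connected cubic $(G,3)$-arc transitive graph. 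This is an \emph{infinite} family --- for instance the line graphs of the Petersen and Heawood graphs --- and it is precisely the fourth row of Table~\ref{maintable}, which your proposed conclusion omits entirely. No appeal to uniqueness of locally-$2\K_2$ graphs can rescue the claim, since $\Hamming(2,3)$ and, say, $L($Petersen$)$ are both locally $2\K_2$ of valency $4$.

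A secondary, smaller point: your closing step promises to determine, for each graph on the list, all admissible groups $G$; for the line-graph family this cannot be done by inspecting a single automorphism group (the family is infinite, and the paper accordingly describes the pair $(\Gamma,G)$ only through the $(G,3)$-arc transitive base graph, leaving the group column of that row open). Once you restore the missing case $c_2=1$ and handle it via the cited geodesic-transitivity classification rather than by direct reconstruction, the rest of your argument aligns with Propositions~\ref{lem:valency 3} and~\ref{2dtval4-girth3} of the paper.
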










{
\begin{table}
\begin{center}
\begin{tabular}{|l|c|c|l|l|}
\hline
$\Gamma$ & valency &  girth & $G$   & Reference \\
\hline 
$\gridcomp{4}$ & $3$ & $4$ & satisfies Condition~\ref{gridcondition} & Section \ref{sec:gridcomp} \\
\hline
Octahedron & $4$ & $3$ &
{\setlength\extrarowheight{0pt}\begin{tabular}{l}
$G\leq S_2\wr S_3$,\\ $|S_2\wr S_3:G|\in\{1,2\}$,\\
$G$ projects onto $S_3$
\end{tabular}}  & Lemma \ref{lem:Octahedron} \\
\hline
$\Hamming(2,3)$ & $4$ & $3$ &
{\setlength\extrarowheight{0pt}\begin{tabular}{l}
$G\leq S_3\wr S_2$,\\ $|S_3\wr S_2:G|\in\{1,2\}$\\
$G$ projects onto $S_2$
\end{tabular}} & Proposition \ref{2dtval4-girth3}\\
\hline
{\setlength\extrarowheight{0pt}\begin{tabular}{l}
the line graph of a connected\\
$(G,3)$-arc transitive graph
\end{tabular}} & 4 & 3 & & Proposition \ref{2dtval4-girth3} \\
\hline
$\gridcomp 5$ & $4$ & $4$ & satisfies Condition~\ref{gridcondition} & Section \ref{sec:gridcomp}\\
\hline
Icosahedron & $5$ & $3$ & $G\in\{A_5,A_5\times C_2\}$ & Lemma \ref{lem:Icosahedron}\\
\hline
$\gridcomp 6$ & $5$ & $4$ & satisfies Condition~\ref{gridcondition} & Section \ref{sec:gridcomp}\\
\hline
\end{tabular}
\end{center}
\caption{$(G,2)$-distance transitive, but not $(G,2)$-arc transitive
graphs of valency at most 5}
\label{maintable}
\end{table}}

In Section~\ref{sect:def} we state the most important definitions and some
basic results related to
$2$-distance transitivity. In Section~\ref{sect:exam},
we study some examples, such
as grids, their complements, Hamming graphs, complete bipartite graphs, and
platonic solids from the point of view of $2$-distance transitivity.
In Section~\ref{sect:girth4}, we consider $2$-distance transitive graphs of girth 4.
Finally the proofs of our main results are given in Section~\ref{sect:proofs}.

\subsection*{Acknowledgment}
During the preparation of this article, the first and second authors
held {\em Ci\^encia sem Fronteiras/Jovem Talento} and
{\em Programa Nacional de P\'os-doutorado} fellowships, respectively,
awarded by the Brazilian Government. In addition, the second
author was also awarded the NNSF grant  11301230 (China).
The third author was supported by the
 research projects 302660/2013-5 {\em (CNPq, Produtividade em Pesquisa)},
 475399/2013-7 {\em (CNPq, Universal)}, and 
 the APQ-00452-13 {\em (Fapemig, Universal)}.

\section{Basic Definitions and useful facts}\label{sect:def}

In this paper,  graphs are finite, simple, and undirected. For a graph $\Gamma$, let $\VGamma$ and $\Aut\Gamma$ denote its vertex
set and automorphism group, respectively.
Let $\Gamma$ be a graph and let
$u$ and $v$ be vertices in $\Gamma$
that belong to the same connected component.
Then the {\em distance} between $u$ and $v$ is the length
of a shortest path between $u$ and $v$  and is
denoted by $d_{\Gamma}(u,v)$. We denote by $\Gamma_s(u)$ the set of vertices at
distance $s$ from $u$ in $\Gamma$ and we set $\Gamma(u)=\Gamma_1(u)$.
The \emph{diameter} $\diam\Gamma$
of $\Gamma$ is the greatest distance between vertices in $\Gamma$.
Let  $G\leq\Aut\Gamma$ and let $s\leq\diam\Gamma$.
We say that $\Gamma$ is \emph{$(G,s)$-distance transitive}
if $G$ is transitive on $\VGamma$ and
$G_u$ is transitive on $\Gamma_i(u)$ for all $i\leq s$.
If $\Gamma$ is $(G,s)$-distance transitive for
$s= \diam \Gamma$,
then we simply say that it is \emph{$G$-distance transitive}.
By our definition, if $s>\diam\Gamma$, then $\Gamma$ is not
$(G,s)$-distance transitive. For instance, the complete graph is not
$(G,2)$-distance transitive for any group $G$.

 In the characterization of $(G,s)$-distance transitive
graphs, the following constants are useful. Our definition is inspired by the concept of intersection arrays defined 
for the distance regular graphs (see \cite{BCN}).

\begin{definition}\label{definition: intersectionarray}
Let $\Gamma$ be a $(G,s)$-distance transitive graph, $u\in\VGamma$, and
let $v\in\Gamma_i(u)$, $i\leq s$. Then the number of edges
from $v$ to $\Gamma_{i-1}(u)$, $\Gamma_i(u)$, and $\Gamma_{i+1}(u)$
does not depend on the choice of $v$ and these numbers are 
denoted, respectively, by $c_i$, $a_i$, $b_i$.
\end{definition}

Clearly we have that $a_i+b_i+c_i$ is equal to the valency of $\Gamma$ whenever the constants are well-defined. Note that for $(G,2)$-distance transitive graphs, the constants are always well-defined for $i=1,\ 2$.


A sequence $(v_0,\ldots,v_{s})$ of vertices of a graph is said to be
an {\em $s$-arc} if $v_i$ is connected to $v_{i+1}$
for all $i\in\{0,\ldots,s-1\}$ and $v_i\neq v_{i+2}$
for all $i\in\{0,\ldots,s-2\}$.
A graph $\Gamma$ is called
\emph{$(G,s)$-arc transitive} if $G$ acts transitively on the set
of vertices and on the set of $s$-arcs of $\Gamma$. (We note that some
authors define $(G,s)$-arc transitivity only requiring that
$G$ should be transitive on the set of $s$-arcs.)
It is well-known, that $\Gamma$ is $(G,2)$-arc transitive
if and only if $G$ is transitive on $\VGamma$, and
the stabilizer $G_u$ is 2-transitive on
$\Gamma(u)$ for some, and hence for all, $u\in\VGamma$. We will use this fact
without further reference in the rest of the paper.


%
The {\em girth} of a graph $\Gamma$ is the length of a shortest cycle
in $\Gamma$. Let $\Gamma$ be a connected  $(G,2)$-distance transitive
graph. If  $\Gamma$ has girth at least 5, then for any two vertices
$u$ and $v$ with $d_{\Gamma}(u,v)=2$, there exists a unique 2-arc between
$u$ and $v$. Hence if $\Gamma$ is $(G,2)$-distance transitive, then it is
$(G,2)$-arc transitive. On the other hand, if the girth of $\Gamma$ is 3, and
$\Gamma$ is not a complete graph, then some $2$-arcs are contained
in a triangle, while some are not. Hence $\Gamma$ is not $(G,2)$-arc transitive.
We record the conclusion of this argument  in the following lemma.

\begin{lemma}\label{lem:girth5}
  Suppose that $\Gamma$ is a $(G,2)$-distance transitive graph. If $\Gamma$ has girth at least $5$, then $\Gamma$ is $(G,2)$-arc transitive. If $\Gamma$ has
  girth~$3$, then $\Gamma$ is not $(G,2)$-arc transitive.
\end{lemma}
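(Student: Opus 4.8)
The plan is to prove the two assertions separately, treating the girth $\geq 5$ case and the girth $3$ case independently, and in each case to reduce the statement about $2$-arcs to the transitivity data already supplied by $(G,2)$-distance transitivity.

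For the first assertion, suppose $\Gamma$ has girth at least $5$. The key observation is that the girth hypothesis forbids both $3$-cycles and $4$-cycles. I would first argue that any $2$-arc $(u,w,v)$ must have $d_\Gamma(u,v)=2$: the endpoints $u$ and $v$ are distinct and cannot be adjacent, since $u\sim w\sim v$ together with $u\sim v$ would produce a triangle. Next I would show that for fixed $u$ and $v$ at distance $2$ the middle vertex $w$ is uniquely determined, because two distinct common neighbours $w_1,w_2$ of $u$ and $v$ would yield a $4$-cycle $u\,w_1\,v\,w_2\,u$. Consequently the $2$-arcs emanating from a fixed vertex $u$ are in bijection with the set $\Gamma_2(u)$. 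Since $\Gamma$ is $(G,2)$-distance transitive, $G$ is transitive on $\VGamma$ and each stabilizer $G_u$ is transitive on $\Gamma_2(u)$; gluing these two transitivities together in the standard way shows that $G$ is transitive on the full set of $2$-arcs, so $\Gamma$ is $(G,2)$-arc transitive.

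For the second assertion, suppose $\Gamma$ has girth $3$. I would first record that $(G,2)$-distance transitivity forces $\diam\Gamma\geq 2$, so $\Gamma$ is non-complete and $\Gamma_2(u)\neq\varnothing$ for each $u$. A shortest cycle of length $3$ is a triangle $\{a,b,c\}$, which provides a $2$-arc $(a,b,c)$ whose endpoints $a,c$ are \emph{adjacent}, i.e.\ at distance $1$. On the other hand, choosing $u$ and $v\in\Gamma_2(u)$ together with a common neighbour $w$ gives a $2$-arc $(u,w,v)$ whose endpoints lie at distance $2$. Because every element of $G\leq\Aut\Gamma$ preserves the distance function $d_\Gamma$, no automorphism can map a $2$-arc with endpoints at distance $1$ to one with endpoints at distance $2$; hence $G$ is not transitive on the set of $2$-arcs, and $\Gamma$ is not $(G,2)$-arc transitive.

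The argument is short, and I do not expect a genuine obstacle. The two points requiring care are precisely where the hypotheses enter: in the girth $\geq 5$ case the uniqueness of the middle vertex (which needs the exclusion of $4$-cycles, not merely triangles), and in the girth $3$ case the guaranteed existence of \emph{both} a triangle-contained $2$-arc and a $2$-arc with non-adjacent endpoints, the latter depending on $\Gamma$ being non-complete. Once these are in place the conclusions follow from the distance-preserving property of automorphisms and the orbit data built into the definition of $(G,2)$-distance transitivity.
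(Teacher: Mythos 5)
Your proposal is correct and follows essentially the same route as the paper, whose proof is the informal paragraph preceding the lemma: for girth at least $5$, the unique $2$-arc between any two vertices at distance $2$ reduces $2$-arc transitivity to transitivity on $\Gamma_2(u)$, and for girth $3$, non-completeness yields both a $2$-arc inside a triangle and one with endpoints at distance $2$, which no automorphism can interchange. Your write-up merely makes explicit the details the paper leaves implicit (no triangles forces $2$-arc endpoints to be at distance $2$; no $4$-cycles gives uniqueness of the middle vertex; $(G,2)$-distance transitivity forces $\diam\Gamma\geq 2$, hence non-completeness), so there is nothing to flag.
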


If  $\Gamma$ has girth 4, then $\Gamma$ can be $(G,2)$-distance
transitive, but not $(G,2)$-arc transitive.
An infinite family of examples can be constructed using Lemma~\ref{lem:gridcomp}.

We close this section with two results on permutation group
theory and another one on
$2$-geodesic transitive graphs.
They will be needed in our analysis
in Sections~\ref{sect:girth4}--\ref{sect:proofs}.
Recall that a permutation group $G$ acting on $\Omega$ is said to be
{\em $2$-homogeneous} if $G$ is transitive on the set of $2$-subsets of
$\Omega$.

\begin{lemma}[\cite{kantor}] \label{2dt-2homonot2t}
 Let $G$ be a  $2$-homogeneous permutation group of
   degree $n$ which is not $2$-transitive. Then
      the following statements are valid:
\begin{enumerate}
\item $n=p^e\equiv 3 \pmod 4$ where $p$ is a prime;
\item $|G|$ is odd and is divisible by $p^e(p^e-1)/2$; 
\end{enumerate}
\end{lemma}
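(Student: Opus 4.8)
The final statement is Lemma~\ref{2dt-2homonot2t}, a classical result of Kantor on finite $2$-homogeneous permutation groups that fail to be $2$-transitive. Since the paper cites this to \cite{kantor}, the cleanest approach is to reduce to the structure theory of $2$-homogeneous groups and then extract the arithmetic constraints. The plan is to exploit the well-known dichotomy: a finite $2$-homogeneous group $G$ on $\Omega$ is either $2$-transitive or \emph{not}, and in the non-$2$-transitive case $G$ must be of affine type. So the first step is to establish that $G$ is $\tfrac12$-transitive in a strong sense and, crucially, \emph{solvable with a regular elementary abelian normal subgroup}, forcing $n=p^e$ to be a prime power.

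\textbf{Key steps.} First I would recall that every $2$-homogeneous group is transitive, and in fact primitive, since it is transitive on $2$-subsets and hence cannot preserve a nontrivial block system (a block of size $\geq 2$ would produce two $2$-subsets, one inside a block and one across blocks, that cannot be interchanged). Second, I would invoke the fact that a $2$-homogeneous group is transitive on \emph{ordered} pairs up to the orbit-counting obstruction: on each $2$-subset $\{x,y\}$ the group acts, and the failure of $2$-transitivity means that no element of $G$ swaps $x$ and $y$ while the group does move $\{x,y\}$ to every other pair. This is exactly the condition that every nontrivial suborbit is self-paired or not, and a parity/counting argument on the point stabilizer $G_x$ acting on $\Omega\setminus\{x\}$ shows that $G_x$ has \emph{odd} order orbits structure, so $|G_x|$ is odd; combined with primitivity this yields the affine structure. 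Third, once $G\leq \AGL_1$-type with socle $V=\F{p}^e$ acting regularly and $G_0\leq \GammaL_1(p^e)$ transitive on $V\setminus\{0\}$, the condition ``$2$-homogeneous but not $2$-transitive'' translates into: $G_0$ is transitive on $V\setminus\{0\}$ but the subgroup $G_0$ does not contain $-1$ (equivalently, $v\mapsto -v$ is not realized by fixing the pair setwise), which forces $-1\notin G_0$ and hence $|G_0|$ odd. From $|G_0|$ odd and transitivity of $G_0$ on the $p^e-1$ nonzero vectors one deduces $p^e-1\mid |G_0|$ is even unless we pair points two-at-a-time: the self-pairing of $\{v,-v\}$ gives $2\mid (p^e-1)$ but $-1$ realized only through the $2$-set action, forcing $p^e\equiv 3\pmod 4$ precisely so that $-1$ is a non-square and the orbit of the squaring action splits the nonzero vectors into the required pattern.

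\textbf{Assembling the arithmetic.} For part~(i), primitivity plus the affine reduction gives $n=p^e$; the congruence $p^e\equiv 3\pmod 4$ comes from requiring that $-1$ be a non-square in $\F{p^e}$ (so that the unique involution-type pairing is absent, which is what distinguishes $2$-homogeneous from $2$-transitive here), and $-1$ is a non-square exactly when $p^e\equiv 3\pmod 4$. For part~(ii), $|G|=p^e\cdot|G_0|$ with $|G_0|$ odd gives $|G|$ odd, and transitivity of $G_0$ on the $p^e-1$ nonzero vectors together with $-1\notin G_0$ means $G_0$ acts with orbits pairing up $v$ and $-v$, so $|G_0|$ is a multiple of $(p^e-1)/2$; hence $p^e(p^e-1)/2\mid |G|$.

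\textbf{Main obstacle.} The hard part will be the second step: rigorously deducing that $G_x$ has odd order (equivalently that $G$ contains no element inverting a pair) purely from the $2$-homogeneous-but-not-$2$-transitive hypothesis, and then upgrading this to the full affine/solvable structure. In the original source this rests on the classification of $2$-transitive groups (or at least on a careful analysis of which primitive groups can be $2$-homogeneous without being $2$-transitive), so the honest route is to cite \cite{kantor} for the structural input and present the arithmetic extraction above as the self-contained remainder. I expect that spelling out why $|G_0|$ odd forces $p^e\equiv 3\pmod 4$ (via the non-square criterion for $-1$) and why it simultaneously delivers the divisibility $p^e(p^e-1)/2\bigm| |G|$ is routine once the affine reduction is in hand, so the entire weight of the proof sits on that one classification-dependent structural reduction.
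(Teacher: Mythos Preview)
The paper does not prove this lemma at all; it is simply cited from Kantor \cite{kantor} and used as a black box. So there is no ``paper's own proof'' to compare against, and your write-up is an independent attempt to justify the cited result.

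Your overall architecture is reasonable, but there is a genuine error in the affine step. You repeatedly assert that $G_0$ is transitive on $V\setminus\{0\}$. This is false, and in fact it is precisely the distinction you are trying to capture: if the point stabiliser $G_0$ were transitive on the remaining points, then $G$ would be $2$-transitive. What actually happens is that $G_0$ has exactly \emph{two} orbits on $V\setminus\{0\}$, each of size $(n-1)/2$, corresponding to the two $G$-orbits on ordered pairs. Several of your downstream sentences (``transitivity of $G_0$ on the $p^e-1$ nonzero vectors together with $-1\notin G_0$ means $G_0$ acts with orbits pairing up $v$ and $-v$'') are internally contradictory for this reason. The correct extraction of $n\equiv 3\pmod 4$ is: since $|G|$ is odd, so is $|G_0|$; each $G_0$-orbit on $V\setminus\{0\}$ has size $(n-1)/2$ dividing $|G_0|$, hence $(n-1)/2$ is odd, i.e.\ $n\equiv 3\pmod 4$. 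The divisibility $n(n-1)/2\mid |G|$ is immediate from orbit--stabiliser on unordered pairs and needs no affine input.

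You have also misjudged where the difficulty lies. Showing $|G|$ is odd is elementary and does not require any classification: if $G$ contained an involution $t$, then since $n$ is odd $t$ fixes a point, and since $t\neq 1$ it has a $2$-cycle $(a\,b)$; but then $t$ swaps the pair $\{a,b\}$, and combined with $2$-homogeneity this forces $2$-transitivity, a contradiction. The genuinely deep step is concluding $n=p^e$: from $|G|$ odd one invokes Feit--Thompson to get $G$ solvable, and a primitive solvable group is affine. Your remark that ``in the original source this rests on the classification of $2$-transitive groups'' is anachronistic --- Kantor's paper is from 1969 and uses Feit--Thompson, not CFSG.
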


\begin{lemma}{\rm(\cite[Theorem 1.51]{Gorenstein-1})}\label{val-2p-1}
If $G$ is a primitive, but not $2$-transitive permutation group on
$2p$ letters where $p$ is a prime, then $p=5$ and $G\cong A_5$ or $S_5$.
\end{lemma}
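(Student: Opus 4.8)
The plan is to prove this as a classical fact about primitive permutation groups of degree $2p$ by a Sylow-theoretic reduction followed by a rank-$3$ analysis. First I would pin down a Sylow $p$-subgroup $P$ of $G$. Since $P$ is a $p$-group acting on $2p$ points and $2p<p^{2}$ for every $p\geq 3$, each $P$-orbit has length $1$ or $p$; writing the orbit lengths as $f\cdot 1+m\cdot p=2p$ forces $m\in\{1,2\}$, whence $|P|=p$ and the Sylow $p$-subgroups of $G$ are cyclic of order $p$. The two smallest primes are then dispatched by direct inspection: for $p=2$ the degree is $4$ and the only primitive groups are $A_{4},S_{4}$, both $2$-transitive, while for $p=3$ the degree is $6$ and the primitive groups ($\PSL(2,5)\cong A_{5}$, $\PGL(2,5)\cong S_{5}$, $A_{6}$, $S_{6}$) are again all $2$-transitive. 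Hence I may assume $p\geq 5$.

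Next, since $G$ is primitive but not $2$-transitive, its rank is at least $3$, and the aim of the suborbit analysis is to show the rank is \emph{exactly} $3$. Here I would use the element of order $p$ (a product of two $p$-cycles, or a single $p$-cycle fixing $p$ points) together with the action of $N_{G}(P)$, whose image in $\Aut(P)$ is cyclic of order dividing $p-1$, to control the lengths of the orbits of the point stabilizer $G_{\alpha}$ on the remaining $2p-1$ points. These subdegrees sum to the \emph{odd} number $2p-1$, so in the rank-$3$ case the two nontrivial subdegrees $k$ and $2p-1-k$ are automatically distinct. Establishing that no further suborbits occur identifies the nontrivial orbital as a strongly regular graph $\Gamma$ on $2p$ vertices whose valency $k$ is one of these subdegrees.

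The heart of the argument is then a Diophantine/integrality step. Because $n=2p\equiv 2\pmod 4$, the conference (half-integer) case is excluded, so the two nonprincipal eigenvalues of $\Gamma$ are integers $r>0>s$ with $rs=\mu-k$ and $r+s=\lambda-\mu$; their multiplicities $f,g$ satisfy $f+g=2p-1$ and $k+fr+gs=0$, and must be positive integers. Combining these with the feasibility relation $k(k-\lambda-1)=(2p-k-1)\mu$ and with $n=2p$ reduces everything to a Diophantine system in $(p,k,\lambda,\mu)$, and I expect its only solution with $p$ prime to be $p=5$ with parameters $(10,3,0,1)$ (the Petersen graph) or its complement $(10,6,3,4)$. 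This integrality computation, together with the preceding step of bounding the rank and the subdegrees via the order-$p$ elements, is where I expect the main difficulty to lie, since it is precisely where the special role of $p=5$ emerges; the suborbit control is the genuinely Wielandt-style part of the work.

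Finally, for $p=5$ I would identify $G$ explicitly. The point stabilizer $G_{\alpha}$ is maximal of index $10$ with subdegrees $1,3,6$, so $\Gamma$ is the Petersen graph and $G$ embeds in its automorphism group, which is $S_{5}$ acting on the ten $2$-subsets of a $5$-set. Since $S_{5}$ acting on pairs is primitive of rank $3$ (the pair stabilizer $S_{3}\times S_{2}$ being maximal) and its only subgroup still transitive and primitive of rank $3$ on the $10$ pairs is its index-$2$ subgroup $A_{5}$, a short transitivity count leaves exactly $G\in\{A_{5},S_{5}\}$, completing the classification.
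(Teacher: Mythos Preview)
The paper does not prove this lemma at all: it is quoted verbatim as \cite[Theorem~1.51]{Gorenstein-1}, i.e.\ as a black-box input from the literature (the result is originally due to Wielandt, 1956). So there is nothing in the paper to compare your argument against; any self-contained proof you supply goes well beyond what the authors attempt.

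As for the viability of your plan, it is broadly the classical Wielandt route and the endgame (rank~$3$ $\Rightarrow$ strongly regular graph on $2p$ vertices $\Rightarrow$ integrality of eigenvalue multiplicities $\Rightarrow$ Petersen parameters $\Rightarrow$ $G\leq S_5$) is sound. Two places deserve more care. First, from $f\cdot 1+m\cdot p=2p$ you correctly get $m\in\{1,2\}$, but this only bounds the number of nontrivial $P$-orbits; it does not by itself yield $|P|=p$, since a priori a Sylow $p$-subgroup could have order $p^{2}$ acting with two orbits of length $p$. Ruling out $p^{2}\mid|G|$ is part of the actual work (and in Wielandt's treatment is intertwined with the suborbit analysis). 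Second, and more seriously, the step ``establish that no further suborbits occur'' (i.e.\ that $G$ has rank exactly $3$) is the heart of the theorem, and your sketch---invoke an element of order $p$ and the cyclic structure of $N_G(P)/C_G(P)$---does not yet explain \emph{how} those ingredients force the subdegree pattern. You rightly flag this as the main difficulty; just be aware that filling it in is essentially the whole proof, and is why the paper is content to cite Gorenstein rather than argue it.
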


An \emph{$s$-geodesic} in a graph $\Gamma$ is a shortest path of length $s$ between vertices in $\Gamma$. In particular, a vertex triple $(u,v,w)$  with $v$ adjacent to both $u$ and $w$ is called a
\emph{$2$-geodesic} if  $u$ and $w$ are not adjacent.  A non-complete
graph $\Gamma$   is said to be  \emph{ $(G,2)$-geodesic transitive}
if $G$  is transitive on
both the arc set and  on the
set of 2-geodesics of $\Gamma$.
Recall that the {\em line graph} $L(\Gamma)$
of a graph $\Gamma$ is graph whose vertices are the edges of
$\Gamma$ and two vertices of $L(\Gamma)$ are adjacent if and only if
they are adjacent to a common vertex of $\Gamma$. For a natural number $n$,
we denote by $\K_n$ the {\em complete graph} on $n$ vertices.


\begin{lemma}{\rm (\cite[Theorem 1.3]{DJLP-2})}\label{2gt-val4}
  Let $\Gamma$ be a connected, non-complete graph of valency  $4$ and girth $3$.
  Then  $\Gamma$ is $(G,2)$-geodesic transitive if and
  only if, either  $\Gamma=L(\K_4)$
  or $\Gamma=L(\Sigma)$ where $\Sigma$ is connected
  cubic $(G,3)$-arc transitive graph.
\end{lemma}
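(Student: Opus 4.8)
The plan is to establish the two directions separately, with the substance lying in the forward one. Throughout, write $[\Gamma(u)]$ for the subgraph of $\Gamma$ induced on the neighbourhood $\Gamma(u)$. Since $(G,2)$-geodesic transitivity entails that $G$ is transitive on the arcs of $\Gamma$, the stabiliser $G_u$ is transitive on $\Gamma(u)$, so $[\Gamma(u)]$ is a vertex-transitive, hence regular, graph on the $4$ vertices of $\Gamma(u)$. Its valency lies in $\{0,1,2,3\}$: the value $0$ is excluded because girth $3$ forces a triangle through $u$ and hence an edge in $[\Gamma(u)]$, while the value $3$ would make $\{u\}\cup\Gamma(u)$ into a $\K_5$ and, by connectivity and regularity, would give $\Gamma\cong\K_5$, contradicting non-completeness. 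Thus $[\Gamma(u)]$ is either the perfect matching $2K_2$ (valency $1$) or the cycle $C_4$ (valency $2$), and I expect these two cases to correspond precisely to the two conclusions.

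In the case $[\Gamma(u)]\cong C_4$ I would force $\Gamma\cong L(\K_4)$ by a short local-propagation argument. Fix $u$ with $\Gamma(u)=\{1,2,3,4\}$ cyclically joined as $1\!-\!2\!-\!3\!-\!4\!-\!1$, so the two non-adjacent (``diagonal'') pairs are $\{1,3\}$ and $\{2,4\}$. Examining $[\Gamma(1)]\cong C_4$, which contains $u,2,4$ with $u\sim 2$, $u\sim 4$ and $2\not\sim 4$, forces the fourth neighbour $w$ of $1$ to satisfy $w\sim 2$, $w\sim 4$, $w\not\sim u$; repeating the argument at vertex $3$ identifies the same vertex $w$ and yields $w\sim 1,2,3,4$. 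Hence $V\Gamma=\{u,w,1,2,3,4\}$, and $\Gamma$ is the complete tripartite graph $K_{2,2,2}=L(\K_4)$, which is easily checked to be $(G,2)$-geodesic transitive for a suitable $G$.

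In the case $[\Gamma(u)]\cong 2K_2$ the idea is to recover $\Sigma$ from the triangles of $\Gamma$. Because $[\Gamma(u)]$ has exactly two edges, each vertex lies in exactly two triangles; and since a neighbour of $u$ has a unique neighbour inside $\Gamma(u)$, every edge of $\Gamma$ lies in exactly one triangle. Thus the triangles form a Krausz partition: they cover the edges, each edge lying in one triangle and each vertex in two. Define $\Sigma$ to have the triangles of $\Gamma$ as vertices, joined when they share a vertex of $\Gamma$; then each vertex of $\Gamma$ corresponds to the edge of $\Sigma$ given by its two triangles, two such edges meeting in $\Sigma$ exactly when the vertices are adjacent in $\Gamma$. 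This gives $\Gamma\cong L(\Sigma)$ with $\Sigma$ connected (as $\Gamma$ is), simple (two triangles meet in at most one vertex), and cubic (each triangle meets three others). A local computation shows $\Sigma$ is triangle-free, so in particular $\Sigma\ne\K_4$ and all $3$-arcs of $\Sigma$ are genuine. Since the triangles are intrinsic to $\Gamma$, the group $G$ acts on $\Sigma$, transitively on its vertices (as $G_u$ is transitive on the two triangles at $u$) and, through the canonical bijections under which arcs of $\Gamma$ correspond to $2$-arcs of $\Sigma$ and $2$-geodesics $(e_1,e_2,e_3)$ with $e_2=\{a,b\}$, $e_1=\{x,a\}$, $e_3=\{b,y\}$ correspond to $3$-arcs $(x,a,b,y)$, transitively on its $3$-arcs. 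Hence $\Sigma$ is $(G,3)$-arc transitive, as required; reading the same correspondences backwards gives the reverse implication for this case.

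The reverse direction is then routine: for $L(\K_4)$ one checks $2$-geodesic transitivity directly (the neighbourhood is a $C_4$ and the vertex stabiliser induces the full dihedral group on it, which is transitive on the ordered diagonals), and for $L(\Sigma)$ with $\Sigma$ cubic $(G,3)$-arc transitive the bijections above carry $2$-arc and $3$-arc transitivity of $\Sigma$ to arc and $2$-geodesic transitivity of $L(\Sigma)$. I expect the main obstacle to be the $2K_2$ case: verifying rigorously that the triangle partition is canonical and $G$-invariant, that the reconstructed $\Sigma$ is a simple cubic graph with $\Gamma\cong L(\Sigma)$, and above all that the $G$-action descends faithfully and turns $2$-geodesic transitivity into $3$-arc transitivity. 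This is where the Whitney-type rigidity of line graphs and the care needed to exclude accidental triangles in $\Sigma$ both enter.
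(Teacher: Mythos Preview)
The paper does not actually prove this lemma: it is quoted without proof as \cite[Theorem~1.3]{DJLP-2}, so there is no in-paper argument to compare your attempt against. That said, your sketch is essentially correct and follows the natural route (and, almost certainly, the one in the cited reference): split on the isomorphism type of $[\Gamma(u)]$, force the octahedron in the $C_4$ case by local propagation, and in the $2\K_2$ case recover $\Sigma$ from the unique triangle through each edge, then translate $2$-geodesic transitivity into $3$-arc transitivity via the standard edge/arc dictionary for line graphs.

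Two small points worth tightening. First, your justification for vertex-transitivity of $G$ on $\Sigma$ is phrased in terms of $G_u$, but the clean argument is: $G$ is arc-transitive on $\Gamma$, hence edge-transitive, and since each edge lies in a unique triangle the induced map from edges to triangles is a $G$-equivariant surjection, giving transitivity on triangles. Second, you assert but do not prove that $\Sigma$ is triangle-free; this is needed so that \emph{every} $3$-arc of $\Sigma$ corresponds to a $2$-geodesic (not a triangle) of $\Gamma=L(\Sigma)$. The argument is short: three pairwise-intersecting triangles of $\Gamma$ would, after checking that the three intersection vertices cannot all be distinct, force some vertex of $\Gamma$ to lie in at least three triangles, contradicting $[\Gamma(u)]\cong 2\K_2$. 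With these two details filled in, and the observation that the $G$-action on $\Sigma$ is faithful because each vertex of $\Gamma$ is the unique common vertex of its two triangles, your proof is complete.
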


We observe that the line graph of $\K_4$ is precisely the octahedral graph (see Lemma \ref{lem:Octahedron}).

\section{Constructions, Examples \& non-Examples}\label{sect:exam}

\subsection{Complements of grids and complete bipartite graphs}\label{sec:gridcomp}


For $n,\ m\geq 2$, we define the $\grid{n}{m}$ as the graph having vertex set $\{(i,j) \mid 1 \leq i\leq n,\ 1\leq j \leq m\}$, and two distinct vertices $(i,j)$ and $(r,s)$
are adjacent if and only if $i=r$ or $j=s$.
The automorphism group of the $\grid{n}{m}$, when $n \neq m$, is the direct product $S_n \times S_m$; when $n=m$, it is $S_n \wr S_2$.
The   complement $\overline{\Gamma}$ of a graph $\Gamma$, is the graph with vertex set $V\Gamma$, and two vertices are adjacent in $\overline{\Gamma}$ if and only if they are not adjacent in $\Gamma$. Clearly, $\Aut\Gamma=\Aut\overline\Gamma$. 
Of particular interest to us is the complement graph $\gridcomp{m}$. The graph in Figure \ref{fig:gridcomp} is the $\gridcomp{4}$. Observe  that for $\Gamma=\gridcomp{m}$, we have $\diam\Gamma = 3$, and
\[
c_1 = 1,\ a_1 = 0,\ b_1 = m-2,\ c_2 = m-2,\ a_2 = 0,\ b_2 = 1.
\]

\begin{figure}[h]

\begin{tikzpicture}


\draw (0,2) -- (2,3);
\draw (0,2) -- (2,4);
\draw (0,2) -- (2,5);

\draw (0,3) -- (2,2);
\draw (0,3) -- (2,4);
\draw (0,3) -- (2,5);

\draw (0,4) -- (2,2);
\draw (0,4) -- (2,3);
\draw (0,4) -- (2,5);

\draw (0,5) -- (2,2);
\draw (0,5) -- (2,3);
\draw (0,5) -- (2,4);



\filldraw[white] (0,2) circle (2pt);
\draw[black] (0,2) circle (2pt) node[left]{$(1,1)$};

\filldraw[white] (0,3) circle (2pt);
\draw[black] (0,3) circle (2pt) node[left]{$(1,2)$};

\filldraw[white] (0,4) circle (2pt);
\draw[black] (0,4) circle (2pt) node[left]{$(1,3)$};

\filldraw[white] (0,5) circle (2pt);
\draw[black] (0,5) circle (2pt) node[left]{$(1,4)$};



\filldraw[white] (2,2) circle (2pt);
\draw[black] (2,2) circle (2pt) node[right]{$(2,1)$};

\filldraw[white] (2,3) circle (2pt);
\draw[black] (2,3) circle (2pt) node[right]{$(2,2)$};

\filldraw[white] (2,4) circle (2pt);
\draw[black] (2,4) circle (2pt) node[right]{$(2,3)$};

\filldraw[white] (2,5) circle (2pt);
\draw[black] (2,5) circle (2pt) node[right]{$(2,4)$};



\draw (6,2.5) -- (7,3.5);
\draw (6,2.5) -- (7,4.5);

\draw (6,3.5) -- (7,2.5);
\draw (6,3.5) -- (7,4.5);

\draw (6,4.5) -- (7,2.5);
\draw (6,4.5) -- (7,3.5);


\draw(5,3.5) -- (6,2.5);
\draw(5,3.5) -- (6,3.5);
\draw(5,3.5) -- (6,4.5);

\draw(8,3.5) -- (7,2.5);
\draw(8,3.5) -- (7,3.5);
\draw(8,3.5) -- (7,4.5);

\filldraw[white] \foreach \x in {
(7,2.5),
(7,3.5),
(7,4.5),
(6,2.5),
(6,3.5),
(6,4.5),
(5,3.5),
(8,3.5)
}
{
\x circle(2pt)
};
\draw[black] \foreach \x in {
(7,2.5),
(7,3.5),
(7,4.5),
(6,2.5),
(6,3.5),
(6,4.5),
(5,3.5),
(8,3.5)
}
{
\x circle(2pt)
};

\draw[black] (5,3.5) circle (2pt) node[left]{$(1,1)$};
\draw[black] (8,3.5) circle (2pt) node[right]{$(2,1)$};

\end{tikzpicture}
\caption{The grid complement $\gridcomp{4}$; and on the right represented according to a distance-partition.}\label{fig:gridcomp}
\end{figure}

\begin{condition}\label{gridcondition}
Let $m\geq 3$ and let $\pi: S_2\times S_m\rightarrow 
S_2$ be the natural projection. We say that a subgroup  $G$ of 
$S_2\times S_m$ satisfies Condition~\ref{gridcondition} if
$G\pi=S_2$ and  
$G\cap S_m$ is a  $2$-transitive, but not $3$-transitive subgroup of $S_m$.
\end{condition}

\begin{lemma}\label{lem:gridcomp}
Let $\Gamma = \gridcomp{m}$ with $m\geq 4$, and let $G\leqslant \Aut\Gamma =S_2\times S_m$. Then $\Gamma$ is  $(G,2)$-distance transitive, but not $(G,2)$-arc transitive if and only if $G$ satisfies Condition \ref{gridcondition}.
\end{lemma}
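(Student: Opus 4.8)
The plan is to translate the two combinatorial conditions (distance-transitivity and arc-transitivity) into transitivity conditions on the subgroup $N:=G\cap S_m$, and then read off the biconditional from a short chain of equivalences. First I would fix coordinates: take $u=(1,1)$, and record that $\Gamma(u)=\{(2,s)\mid 2\leq s\leq m\}$, $\Gamma_2(u)=\{(1,s)\mid 2\leq s\leq m\}$, and $\Gamma_3(u)=\{(2,1)\}$, consistent with the displayed intersection array. The group $\Aut\Gamma=S_2\times S_m$ acts by $(\sigma,\tau)\cdot(i,j)=(\sigma(i),\tau(j))$, the projection $\pi$ onto $S_2$ has kernel $\{1\}\times S_m$ (identified with $S_m$), and so $N=G\cap S_m=\ker(\pi|_G)$.

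Next I would compute the stabiliser. Since $\sigma(1)=1$ forces $\sigma=1$ in $S_2$, one gets $G_u=\{(1,\tau)\in G\mid \tau(1)=1\}=N_1$, the stabiliser of the point $1$ in $N$ acting on columns $\{1,\dots,m\}$. Its action on $\Gamma(u)$ is $(1,\tau)\colon(2,s)\mapsto(2,\tau(s))$ and on $\Gamma_2(u)$ is $(1,\tau)\colon(1,s)\mapsto(1,\tau(s))$; both are literally the action of $N_1$ on $\{2,\dots,m\}$. Hence $G_u$ is transitive on $\Gamma(u)$ if and only if it is transitive on $\Gamma_2(u)$, and each holds exactly when $N_1$ is transitive on $\{2,\dots,m\}$. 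For vertex-transitivity, elements of $N$ preserve the row index, so $G$ is transitive on $\VGamma$ if and only if $G\pi=S_2$ (to interchange the two rows) and $N$ is transitive on columns (to move within a row); the forward implication uses that sending $(1,1)$ to $(1,j)$ requires an element with trivial $S_2$-part, i.e.\ an element of $N$. Combining these, $(G,2)$-distance transitivity is equivalent to $G\pi=S_2$ together with $N$ being $2$-transitive on $\{1,\dots,m\}$.

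Finally I would translate arc-transitivity: $\Gamma$ is $(G,2)$-arc transitive if and only if $G_u=N_1$ is $2$-transitive on $\Gamma(u)$, that is, $N_1$ is $2$-transitive on $\{2,\dots,m\}$, which (given that $N$ is already $2$-transitive, hence transitive) is equivalent to $N$ being $3$-transitive on $\{1,\dots,m\}$. Therefore ``$(G,2)$-distance transitive but not $(G,2)$-arc transitive'' becomes ``$G\pi=S_2$ and $N$ is $2$-transitive but not $3$-transitive on $\{1,\dots,m\}$'', which is exactly Condition~\ref{gridcondition}, giving both directions at once.

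The main obstacle is the bookkeeping in the stabiliser step: one must verify carefully that fixing $u$ kills the $S_2$-component, that the actions on $\Gamma(u)$ and $\Gamma_2(u)$ coincide with the $N_1$-action on $\{2,\dots,m\}$, and that the standard passage between transitivity of a point stabiliser and $k$-transitivity of $N$ is applied in the correct direction. I would also note explicitly where the hypothesis $m\geq 4$ enters, namely to guarantee $\Aut\Gamma=S_2\times S_m$ and the stated distance structure (so that $\gridcomp m$ is connected of diameter~$3$), and that $2$- versus $3$-transitivity is a genuine distinction in this range.
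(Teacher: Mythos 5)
Your proposal is correct and follows essentially the same route as the paper's proof: both fix $u=(1,1)$, identify $G_u$ with the point stabiliser $H_1$ of $H=G\cap S_m$, observe that the actions on $\Gamma(u)$ and $\Gamma_2(u)$ are both the natural action of $H_1$ on $\{2,\dots,m\}$, and translate $2$-distance (respectively $2$-arc) transitivity into $H$ being $2$-transitive (respectively $3$-transitive), together with $G\pi=S_2$ for vertex transitivity. The only difference is presentational — you organise the argument as a single chain of equivalences where the paper proves the two implications separately — and your explicit verification that moving $(1,1)$ to $(1,j)$ forces a trivial $S_2$-component is a welcome detail that the paper leaves implicit.
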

\begin{proof}
Let $\Delta_1=\{(1,i)\mid i=1,2,\ldots,m\}$ and
$\Delta_2=\{(2,i)\mid i=1,2,\ldots,m\}$ be the two biparts of
$V\Gamma$. Let $u=(1,1)\in \Delta_1$.  Suppose first that $\Gamma$ is
$(G,2)$-distance transitive, but not $(G,2)$-arc transitive.  Since
$G$ is transitive on $V\Gamma$, $G$ projects onto $S_2$, that is,
$G\pi=S_2$. Let $H=G\cap S_m$. Then $G_u=H_1$, $\Delta_2=\Gamma(u)\cup
\{(2,1)\}$ and $\Gamma_2(u)=\Delta_1\setminus \{u\}$. Since $\Gamma$
is  $(G,2)$-distance transitive, $G_u=H_1$ is transitive on both
$\Gamma(u)$ and $\Gamma_2(u)$. Hence $H_1$ is transitive on $\{2,\ldots,m\}$, 
and so $H$ is a  $2$-transitive  subgroup
of $S_m$. Since $\Gamma$ is   not $(G,2)$-arc transitive,
$G_u=H_1$ is not 2-transitive on  $\{2,3,\ldots,m\}$,
so $H$ is not  $3$-transitive. Thus $G$ satisfies Condition
\ref{gridcondition}.

Conversely, suppose that $G$ satisfies Condition \ref{gridcondition}.
Then  $H=G\cap S_m$ is transitive on $\Delta_1$ and $\Delta_2$,
and $G$ swaps these two sets. Thus $G$ is transitive on $\VGamma$.
As $H$ is a 2-transitive, but not 
3-transitive subgroup of $S_m$, $H_1$ is transitive, but not 2-transitive on 
$\Gamma(u)=\{(2,i)\mid i=2,\ldots,m\}$ 
and on $\Gamma_2(u)=\{(1,i)\mid i=2,\ldots,m\}$. 
Hence $\Gamma$ is  $(G,2)$-distance transitive, but not $(G,2)$-arc transitive.
\end{proof}

A list of $2$-transitive, but not $3$-transitive permutation groups 
can  be found in~\cite[pp.~194-197]{cameron}.

Complete bipartite graphs   appear frequently in this paper. Since  $\K_{m,n}$ with $m\neq n$ is not regular, we study $\K_{m,m}$.
The full automorphism group of $\K_{m,m}$ is $S_m \wr S_2$, and this
automorphism group acts $2$-arc transitively on $\K_{m,m}$. In the lemma below, we show that there is no $2$-distance transitive
 action on $\K_{m,m}$ which is not $2$-arc transitive.


\begin{lemma}\label{lem:complete bipartite}
Let $\Gamma\cong \K_{m,m}$ with $m\geq 2$ and let
$G\leq\Aut \Gamma$.
Then  $\Gamma$  is $(G,2)$-distance transitive if and only if it  is $(G,2)$-arc transitive.
\end{lemma}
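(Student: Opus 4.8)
The plan is to prove the two implications separately; the forward direction ($(G,2)$-arc transitive $\Rightarrow$ $(G,2)$-distance transitive) is routine and the converse carries all the weight. For the easy direction I would note that in any graph of diameter at least two, two vertices $w,w'\in\Gamma_2(u)$ are the end-vertices of $2$-arcs $(u,x,w)$ and $(u,x',w')$; transitivity of $G$ on $2$-arcs then supplies $g\in G_u$ with $g(w)=w'$, so $G_u$ is transitive on $\Gamma_2(u)$, while transitivity on $\Gamma(u)$ is immediate from $2$-transitivity of the local action. The rest of the work is the converse.

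For the converse I would first fix the combinatorial picture. Since $\Gamma\cong\K_{m,m}$ is connected and bipartite, its bipartition $\{\Delta_1,\Delta_2\}$ is preserved setwise by $\Aut\Gamma=S_m\wr S_2$, and for $u\in\Delta_1$ one has $\Gamma(u)=\Delta_2$ and $\Gamma_2(u)=\Delta_1\setminus\{u\}$. Because $G$ is transitive on the $2m$ vertices, $\{\Delta_1,\Delta_2\}$ is a $G$-block system, so $G^{+}:=G\cap(S_m\times S_m)$ has index $2$ in $G$, is transitive on each of $\Delta_1,\Delta_2$, and contains $G_u$ (an element fixing $u\in\Delta_1$ cannot swap the parts). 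Writing $\pi_1,\pi_2$ for the two coordinate projections and $P=\pi_1(G^{+})$, $Q=\pi_2(G^{+})$, the group $G^{+}$ is a subdirect subgroup of $P\times Q$.

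Next I would translate the distance-transitivity hypotheses. As $\pi_1$ maps $G_u=G^{+}_u$ onto the point stabiliser $P_u$, transitivity of $G_u$ on $\Gamma_2(u)=\Delta_1\setminus\{u\}$ is equivalent to $P_u$ being transitive on $\Delta_1\setminus\{u\}$, that is, to $P$ being $2$-transitive on $\Delta_1$; applying the same argument at a vertex of $\Delta_2$ shows $Q$ is $2$-transitive on $\Delta_2$. It then remains to prove that the local action $K:=\pi_2(G_u)$ on $\Gamma(u)=\Delta_2$ is $2$-transitive, which is precisely $(G,2)$-arc transitivity. A short index computation with the subdirect structure gives $[Q:K]=m\,|M_1\cap P_u|/|M_1|$ with $M_1=\pi_1(\ker(\pi_2|_{G^{+}}))\trianglelefteq P$. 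Since $P$ is $2$-transitive, hence primitive, $M_1$ is trivial or transitive: if $M_1$ is transitive then $[Q:K]=1$, so $K=Q$ is $2$-transitive; and if $\ker(\pi_1|_{G^{+}})\neq1$ the symmetric computation makes the local action at a vertex of $\Delta_2$ equal to $P$, so $K$, being permutation isomorphic to it by vertex-transitivity, is again $2$-transitive.

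The remaining, and main, obstacle is the case where $G^{+}$ acts faithfully on both parts, so that $\pi_1,\pi_2$ are isomorphisms and $G^{+}=\{(p,\theta p):p\in P\}$ is a diagonal copy of $P\cong Q$ with $K=\theta(P_u)$ for an abstract isomorphism $\theta$. Here $\theta(P_u)$ is a maximal (since $P_u$ is maximal in the primitive $P$), transitive subgroup of index $m$ in $Q$, and the difficulty is to exclude its being transitive yet not $2$-transitive. I would attack this by exploiting the part-swapping element, which normalises $G^{+}$ and forces $\theta$ to be compatible with the transport isomorphism between $\Delta_1$ and $\Delta_2$, thereby constraining $\theta$ to an essentially involutory outer automorphism; combining this with the classification of finite $2$-transitive groups—or, in the event that the local action is only $2$-homogeneous, with Kantor's Lemma~\ref{2dt-2homonot2t}—should show that $\theta(P_u)$ is in fact $2$-transitive, which completes the proof.
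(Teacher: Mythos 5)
Up to the diagonal case your argument is sound and essentially parallels the paper's: the paper performs the same reduction, phrased via the unique minimal normal subgroup $N$ of the $2$-transitive group $M$ and whether $G_0$ contains $N\times N$, rather than via your Goursat kernels, but the content is identical, and your index formula $[Q:K]=m\,|M_1\cap P_u|/|M_1|$ is correct. The genuine gap is the diagonal case, which is the heart of the proof and which you leave at the level of ``should show''. There, one has a $2$-transitive group $Q$ of degree $m$ together with a transitive, maximal subgroup $K=\theta(P_u)$ of index $m$ that is abstractly isomorphic to a point stabilizer, i.e.\ a factorization $Q=Q_v\,\theta(P_u)$; the paper disposes of this by invoking Baumeister's classification of such factorizations of primitive groups \cite[Theorem~1.1]{baum}, which pins $M$ down to the affine shape $[(\F{2})^3\rtimes\PSL(3,2)]\wr X$ or to $\Soc M\in\{\pomegap{8}{q},\ \Sp(4,q),\ A_6,\ M_{12}\}$, and then checks case by case that the resulting local actions ($\PSL(3,2)$ on $8$ points, $A_5$ or $S_5$ on $6$, $M_{11}$ on $12$) are automatically $2$-transitive. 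No comparably strong classification input appears in your sketch, and ``transitive maximal subgroup isomorphic to a point stabilizer'' cannot be upgraded to ``$2$-transitive'' without one.

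Your two proposed substitutes do not close this gap. First, the constraint extracted from the part-swapping element amounts to saying that $\theta$ composed with a suitable inner automorphism squares to an inner automorphism, and this excludes nothing: the genuine diagonal configurations arise precisely from the exceptional outer automorphisms of $A_6$, $S_6$ and $M_{12}$, which \emph{are} involutory modulo inner automorphisms, so they survive your constraint --- and they must, since they give honest $(G,2)$-distance transitive actions on $\K_{m,m}$ (your instinct that the right conclusion is that $\theta(P_u)$ \emph{is} $2$-transitive, rather than a contradiction, is correct and matches the paper). Second, Kantor's Lemma~\ref{2dt-2homonot2t} has no purchase here: nothing in the hypotheses makes the local action $K$ on $\Gamma(u)$ $2$-homogeneous --- $2$-distance transitivity gives only transitivity on $\Gamma(u)$ and on $\Gamma_2(u)=\Delta_1\setminus\{u\}$, and the latter was already spent in making $P$ $2$-transitive --- so the appeal to that lemma is misplaced. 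What is missing, concretely, is a factorization theorem of Baumeister (or Liebeck--Praeger--Saxl) type; with it the diagonal case becomes a finite check, and without it your argument cannot conclude.
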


\begin{proof}[Proof]
If $\Gamma$ is $(G,2)$-arc transitive, then, 
by definition, it  is $(G,2)$-distance transitive.
Conversely, suppose that $\Gamma$ is $(G,2)$-distance transitive
with some $G\leq\Aut \Gamma$. 
Let $\VGamma=\Delta_1\cup\Delta_2$ be the bipartition of $V \Gamma$ where
$\Delta_1=\{(1,i)\mid i=1,\ldots,m\}$ and $\Delta_2=\{(2,i)\mid i=1,\ldots,m\}$.
The full automorphism group of $\Gamma$ is $S_m\wr S_2$.
Since $G\leq\Aut \Gamma$ is assumed to be vertex transitive,
 $G_{\Delta_1}=G_{\Delta_2}$ is transitive
on both $\Delta_1$ and $\Delta_2$. Set $G_0=G_{\Delta_1}$. Thus $G_0$ is a
subdirect subgroup in $M^{(1)}\times M^{(2)}$ where $M^{(i)}\leq S_m$
and $M^{(i)}$ is the image of $G_0$
under the $i$-th coordinate projection $S_m\times
S_m\rightarrow S_m$.
Further, $G$ projects
onto $S_2$ under the natural projection $\Aut \Gamma\rightarrow S_2$.
If $x=(x_1,x_2)\sigma\in G$ with $x_i\in S_m$ and $\sigma=(1,2)\in S_2$, then
$(M^{(1)})^{x_1}=M^{(2)}$, and so   $M^{(1)}$ and $M^{(2)}$ are conjugate 
subgroups of $S_m$. Hence possibly replacing $G$ with its conjugate
$G^{(x_1,1)}$, we may assume without loss of generality that $M^{(1)}=M^{(2)}=M$.

Let $u=(1,1)\in\VGamma$. 
Then $\Gamma(u)=\Delta_2$ and $\Gamma_2(u)=\Delta_1\setminus \{u\}$.
Further, $G_u$ stabilizes $\Delta_1$, and hence $G_u\leq G_0$.
Since $\Gamma$  is $(G,2)$-distance transitive, it follows that $G_u$ is
transitive on both $\Delta_2$ and $\Delta_1\setminus \{u\}$.
Set $H=M_1$.
Since $G_u\leq H\times M$, the stabilizer $H$ must be transitive on 
$\{2,\ldots,m\}$, and 
hence $M$ is a 2-transitive subgroup of $S_m$.
In particular $M$ contains a unique minimal
normal subgroups $N$ and this minimal normal subgroup is either
elementary abelian or simple. 
Since $N$ is transitive, we can write $M=NH$.
We have that $G_0$ contains $1\times N$ 
if and only if it contains $N\times 1$. 
Hence we need to consider two cases: the first
is when $G_0$ contains $N\times N$ and the second is when  it
does not.

Suppose first that $G_0$ contains
$N\times N$. In particular, $1\times N\leq G_u$.
For all $h_2\in H$, there is some $n_1h_1\in M$
with $n_1\in N$ and $h_1\in H$ such
that $(n_1h_1,h_2)\in G_0$.
Since $N\times 1\leq G_0$, this implies that
$(h_1,h_2)\in G_0$ and also $(h_1,h_2)\in G_u$.
Thus $G_u$ projects onto  $NH=M$ by the second projection. 
Hence $G_u$ is 2-transitive on $\Delta_2=\Gamma(u)$, which shows
that $\Gamma$ is $(G,2)$-arc transitive. 

Suppose now that $N\times N$ is not contained in $G_0$. 
Since $G_0\cap (1\times M)$ is a normal subgroup of $M$ and $N$ is the unique 
minimal normal  subgroup of $M$, we find that $G_0\cap (1\times M)=1$ 
and, similarly, that $G_0\cap (M\times 1)=1$.
Therefore
$G_0$ is a diagonal subgroup; that is,
$$
G_0=\{(t,\alpha(t))\mid t\in M\}
$$
with some $\alpha\in\Aut M$.
As $H$ is the stabilizer of $1$ in $M$,
we have that $G_u=\{(t,\alpha(t))\mid t\in H\}$.
On the other hand, $G_u$ is transitive on $\Delta_2$, and hence
$\alpha(H)$ is a transitive subgroup of $M$.
Thus we obtain the factorization $M=H\alpha(H)$.
The
following possibilities are listed in~\cite[Theorem~1.1]{baum}.
\begin{enumerate}
\item[(a)] Either $M$ is affine and is isomorphic to
$[(\F{2})^3\rtimes \PSL(3,2)]\wr X$ where $X$ is a transitive permutation group;
\item[(b)]
or $\Soc M\cong \pomegap 8q$, $\Sp(4,q)$ ($q$ even with $q\geq 4$), $A_6$, $M_{12}$.
\end{enumerate}
In case~(a), if $X\neq 1$, then
$M$ is contained in a wreath product in
product action, and such a wreath product is never 2-transitive.
Thus $X=1$, $m=8$, $M=(\F{2})^3\rtimes \PSL(3,2)$, and
$G_u\cong\PSL(3,2)$ acting transitively on $\Delta_2$. However, this transitive
action
of $\PSL(3,2)$ is $2$-transitive, which gives that $\Gamma$ is
$(G,2)$-arc transitive.

In case~(b), inspecting the list of
almost simple 2-transitive groups in~\cite{cameron},
we find that there are no 2-transitive groups with socle
$\pomegap 8q$ or $\Sp(4,q)$ with $q$ even and $q\geq 4$.
Hence $\Soc M= A_6$ or
$M_{12}$. Then $G_u$ is either $A_5$, $S_5$ or $M_{11}$ acting
transitively on $\Delta_2$. These actions are all 2-transitive, which implies
that $\Gamma$ is $(G,2)$-arc transitive.
\end{proof}

\subsection{Hamming graphs and platonic solids}

For $d,\ q\geq 2$, the vertex set of the
{\em Hamming graph} $\Hamming(d,q)$ is the set $\{1,\ldots,q\}^d$ and
two vertices $u=(\alpha_1,\ldots,\alpha_d)$ and $v=(\beta_1,\ldots,\beta_d)$
are adjacent if and only if their Hamming distance is one; that is,
they differ in precisely one coordinate.
The Hamming graph 
has diameter $d$ and has girth $4$ when $q=2$ and girth $3$ when $q > 2$. 
The wreath product $W = S_q\wr S_d$ is the full automorphism group of $\Gamma$, acting distance transitively, see \cite[Section 9.2]{BCN}. The Hamming graphs are well studied, due in part to their applications to coding theory.
Hamming graphs arise in two cases   of our research.
The first case is the cube $\Gamma=\Hamming(3,2)$. The standard 
construction of the cube graph is precisely the same as for the Hamming 
graphs with $d=3$ and $q=2$, and so this graph is the `standard' 
cube with $8$ vertices (the cube $\Hamming(3,2)$ is also isomorphic 
to the grid complement $\gridcomp{4}$).
The second case  is $\Gamma=\Hamming(d,2)$ when  $d>2$; see 
Lemma~\ref{lem:cube1}.

Some  platonic solids (cube, octahedron and icosahedron)   appear in some form in our investigation. The   cube appears as the $\gridcomp{4}$. We discuss in more detail the octahedron and the icosahedron.
The octahedron (see Figure \ref{fig:Octahedron}) has 6 vertices and  diameter 2. Its  automorphism group $S_2 \wr S_3$  acts imprimitively preserving the partition of vertices into antipodal pairs. We  denote by 
$\pi$  the natural projection $S_2 \wr S_3\rightarrow S_3$. 
\begin{figure}[h]
\centering

\begin{tikzpicture}

\draw (1,2.5)-- (3,1); \draw (3,1)-- (5,2.5);

\draw (1,2.5)-- (3,2); \draw (3,2)-- (5,2.5);

\draw (1,2.5)-- (3,3); \draw (3,3)-- (5,2.5);

\draw (1,2.5)-- (3,4); \draw (3,4)-- (5,2.5);

\draw (3,1)-- (3,2); \draw (3,2)-- (3,3); \draw (3,3)-- (3,4);

\draw (3,1) .. controls (3.5,2) and (3.5,3) .. (3,4);

\filldraw[white] (1,2.5) circle (2pt)  (5,2.5) circle (2pt);
\draw[black] (1,2.5) circle (2pt) node[left]{$a$} (5,2.5) circle (2pt) node[right]{$a'$};

\filldraw[white] (3,4) circle (2pt) (3,1) circle (2pt);
\draw[black] (3,4) circle (2pt)  node[above]{$b$}(3,1) circle (2pt)  node[below]{$c$};

\filldraw[white] (3,2) circle (2pt) (3,3) circle (2pt);
\draw[black] (3,2) circle (2pt)  node[xshift=-0.15cm, yshift=0.25cm]{$b'$} (3,3) circle (2pt)  node[xshift=-0.15cm, yshift=0.2cm]{$c'$};


\end{tikzpicture}
\caption{The octahedron, displayed according to its distance-partition.}\label{fig:Octahedron}
\end{figure}
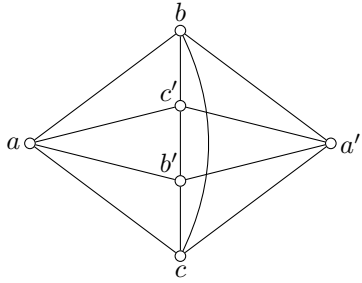

\begin{lemma}\label{lem:Octahedron}
Let $\Gamma$ be the octahedron, and let $G \leqslant \Aut\Gamma $. Then $\Gamma$ is  not $(G,2)$-arc transitive. Further,  $\Gamma$ is $(G,2)$-distance transitive, if and only if  either $G= S_2\wr S_3$, or $G$ is an index $2$ subgroup of $S_2\wr S_3$  and $G\pi =S_3$.
\end{lemma}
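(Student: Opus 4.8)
\emph{Setup and reduction.} I would begin by fixing notation for the three antipodal (non-adjacent) pairs of the octahedron: $P_1=\{a,a'\}$, $P_2=\{b,b'\}$, $P_3=\{c,c'\}$, so that $\Aut\Gamma=S_2\wr S_3$ permutes the $P_i$ via $\pi$, with base group $B=\langle\tau_1,\tau_2,\tau_3\rangle\cong (C_2)^3$, where $\tau_i$ interchanges the two vertices of $P_i$. The single structural fact driving the whole proof is that every vertex is adjacent to all others except its antipode, so $\diam\Gamma=2$ and $\Gamma_2(a)=\{a'\}$ is a \emph{singleton}, while $\Gamma(a)=\{b,b',c,c'\}$. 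Hence $G_a$ is automatically transitive on $\Gamma_2(a)$, and the plan's backbone is the reduction: $\Gamma$ is $(G,2)$-distance transitive if and only if $G$ is transitive on $\VGamma$ and $G_a$ is transitive on the four vertices of $\Gamma(a)$. For the first assertion I would argue directly that $(b,a,c)$ is a $2$-arc lying in the triangle $\{a,b,c\}$, while $(b,a,b')$ is a $2$-arc contained in no triangle (as $b,b'$ are non-adjacent); since lying in a triangle is an $\Aut\Gamma$-invariant, hence $G$-invariant, property of a $2$-arc, no $G$ can be transitive on the $2$-arc set, so $\Gamma$ is not $(G,2)$-arc transitive for any $G\leq\Aut\Gamma$.

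\emph{Necessity.} Assuming $(G,2)$-distance transitivity, I would combine the reduction with orbit--stabilizer counting: vertex-transitivity gives $|G|=6|G_a|$, and transitivity of $G_a$ on the four-point set $\Gamma(a)$ gives $4\mid|G_a|$, so $24\mid|G|\leq 48$ and thus $|G|\in\{24,48\}$. If $|G|=48$ then $G=S_2\wr S_3$. If $|G|=24$ then $G$ has index $2$, and here the key step is to upgrade transitivity of $G_a$ on $\Gamma(a)$ to $G\pi=S_3$. Since $\{b,b'\}$ and $\{c,c'\}$ are antipodal pairs, they are blocks for $G_a$, so a transitive $G_a$ must contain an element interchanging these two blocks while fixing $P_1$; such an element has $\pi$-image equal to the transposition $(2,3)$. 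As $G$ is vertex-transitive, $G\pi$ is transitive on $\{1,2,3\}$ and so equals $A_3$ or $S_3$; since $A_3$ contains no transposition, $G\pi=S_3$.

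\emph{Sufficiency.} For the converse, the case $G=S_2\wr S_3$ is immediate from the reduction, with $G_a\cong S_2\wr S_2$ acting transitively on $\Gamma(a)$. So suppose $G$ has index $2$ and $G\pi=S_3$. I would first identify $K:=G\cap B$: it has order $4$ and is normal in $G$, hence invariant under $G\pi=S_3$ acting on $B\cong(\F 2)^3$ by coordinate permutation, and the unique such subgroup is the even-weight subgroup $K=\{v:v_1+v_2+v_3=0\}$. Vertex-transitivity then follows because $\tau_1\tau_2\in K$ sends $a\mapsto a'$ while $G\pi=S_3$ moves $P_1$ onto every $P_i$. For transitivity of $G_a$ on $\Gamma(a)$, note that $\tau_2\tau_3\in K\cap G_a$ interchanges $b\leftrightarrow b'$ and $c\leftrightarrow c'$; moreover, choosing any $g\in G$ with $\pi(g)=(2,3)$, if $g$ does not already fix $a$ I would replace it by $(\tau_1\tau_2)g\in G$, which fixes $a$ and still has $\pi$-image $(2,3)$, yielding an element of $G_a$ that swaps the two blocks. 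These two elements generate a subgroup of $G_a$ transitive on $\{b,b',c,c'\}$, completing the argument.

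\emph{Main obstacle.} The only genuinely delicate part is the index-$2$ analysis on both sides: recognising that transitivity on the four-point neighbourhood is equivalent to the top group being all of $S_3$ rather than $A_3$ (which pins down precisely the condition $G\pi=S_3$ and excludes the subgroup $B\rtimes A_3$), and, in the sufficiency direction, the small correction by $\tau_1\tau_2\in K$ needed to convert an arbitrary preimage of $(2,3)$ into an element of the point stabiliser $G_a$. Everything else is routine bookkeeping with the wreath-product structure.
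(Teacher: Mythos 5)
Your proposal is correct, and on the two parts the paper actually argues it follows the same route: the failure of $(G,2)$-arc transitivity comes from the triangle invariant (your pair $(b,a,c)$ versus $(b,a,b')$ is exactly the girth-$3$ mechanism behind the paper's Lemma~\ref{lem:girth5}), and your necessity argument --- orbit--stabilizer giving $24\mid |G|$, hence $|G|\in\{24,48\}$, followed by the observation that transitivity of $G_a$ on $\Gamma(a)$ forces an element whose $\pi$-image is a transposition, ruling out $G\pi=A_3$ --- is the same counting-plus-blocks argument as in the paper, merely phrased via transpositions rather than via the paper's contrapositive (if $G\pi=\mathbb{Z}_3$ then $G_a$ fixes each antipodal block). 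Where you genuinely go beyond the paper is the sufficiency direction, which the paper dispatches with ``simple calculation shows'': you first make the clean reduction that $\Gamma_2(a)=\{a'\}$ is a singleton, so $(G,2)$-distance transitivity is equivalent to vertex-transitivity plus transitivity of $G_a$ on $\Gamma(a)$, and then you verify this for an arbitrary index-$2$ subgroup with $G\pi=S_3$ by identifying $K=G\cap B$ as the even-weight subgroup of $(\mathbb{F}_2)^3$ (the unique $S_3$-invariant subgroup of order $4$, invariance holding because $B$ is abelian and $K$ is normal in $G$) and exhibiting explicit generators $\tau_2\tau_3$ and a corrected preimage of $(2,3)$ in $G_a$. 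All these steps check out (in particular $\tau_1\tau_2,\tau_2\tau_3\in K$ and the correction $(\tau_1\tau_2)g$ does fix $a$ while keeping $\pi$-image $(2,3)$), so your write-up is in effect a completed version of the paper's proof: same skeleton for necessity, plus the omitted verification that the stated conditions really do suffice --- a worthwhile addition, since it is exactly at the ``simple calculation'' that one must see why every index-$2$ subgroup projecting onto $S_3$ contains enough of the base group.
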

\begin{proof}
Since $\Gamma$ is non-complete of girth 3, $\Gamma$ is not $(G,2)$-arc
transitive. Now assume that $\Gamma$ is $(G,2)$-distance transitive.
Let $u=a$ be the vertex   in the graph of Figure \ref{fig:Octahedron}.
Since $G_u$ is transitive on $\Gamma(u)$ and $|\Gamma(u)|=4$,
$|G_u|$ is divisible by 4. Further, $|G:G_u|=6$, and  so $|G|$ is
divisible by 24. Suppose that $G$ is a proper subgroup of $\Aut
\Gamma=S_2\wr S_3$. Then $|G|=24$. As $|\Aut \Gamma|=48$, $G$ is
an index $2$ subgroup of $S_2\wr S_3$.  The three antipodal blocks of
$V\Gamma$ in the graph of Figure \ref{fig:Octahedron} are
$\Delta_1=\{a,a'\}$, $\Delta_2=\{b,b'\}$ and
$\Delta_3=\{c,c'\}$. Since $G$ is transitive on $V\Gamma$, $G$ is
transitive on the three antipodal blocks. Thus the image $G\pi$  of
$G$ in $S_3$ is $\mathbb{Z}_3$ or $S_3$. Assume
$G\pi=\mathbb{Z}_3$. Then $G_u$ acts on the three antipodal blocks
trivially. Hence $G_u$ does not map $\Delta_2$ to $\Delta_3$,
contradicting that $G_u$ is transitive on $\Gamma(u)$. Therefore
$G\pi=S_3$. Simple calculation shows that the conditions stated in the lemma
are sufficient for $2$-distance transitivity.
\end{proof}


The icosahedron  has automorphism group $S_2 \times A_5$ acting arc transitively.
\begin{lemma}\label{lem:Icosahedron}
  Let $\Gamma$ be the icosahedron, and let $G \leqslant \Aut\Gamma $. The
  graph $\Gamma$ is $(G,2)$-distance transitive if and only if
  $G = S_2 \times A_5$ or $G=A_5$. In particular, $\Gamma$ is  not $(G,2)$-arc transitive.
\end{lemma}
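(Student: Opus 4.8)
The claim that $\Gamma$ is not $(G,2)$-arc transitive is immediate from Lemma~\ref{lem:girth5}, since the icosahedron is a non-complete graph of girth~$3$ and this holds for every $G\le\Aut\Gamma$. So it remains to prove the distance-transitivity equivalence. My first step would be to record the distance partition of $\Gamma$ with respect to a base vertex $u$. As $\Gamma$ has $12$ vertices, diameter~$3$, and is antipodal, one has $|\Gamma(u)|=|\Gamma_2(u)|=5$ and $|\Gamma_3(u)|=1$, where the last set consists of the vertex $u'$ antipodal to $u$ and in fact $\Gamma_2(u)=\Gamma(u')$. Consequently $\Gamma$ is $(G,2)$-distance transitive exactly when $G$ is transitive on $V\Gamma$ and $G_u$ is transitive on each of the two $5$-element sets $\Gamma(u)$ and $\Gamma_2(u)$.

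For sufficiency I would show that the rotation subgroup $A_5\cong 1\times A_5$ already witnesses $2$-distance transitivity. It is transitive on $V\Gamma$, and its vertex stabiliser $(A_5)_u$ has order $60/12=5$; being cyclic of order~$5$ (the $5$-fold rotation about the $u$--$u'$ axis) it permutes the pentagon $\Gamma(u)$ as a single $5$-cycle, and likewise acts transitively on $\Gamma_2(u)=\Gamma(u')$. Hence $A_5$ is $(A_5,2)$-distance transitive, and since $S_2\times A_5$ contains $A_5$, its vertex stabiliser contains $(A_5)_u$, so the same two transitivity statements persist; thus $S_2\times A_5$ is $(S_2\times A_5,2)$-distance transitive as well. (Alternatively, the transitivity of the $C_5$-stabiliser on the two pentagons may simply be quoted from the classical fact, recorded in \cite{BCN}, that the icosahedron is distance-transitive.)

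For necessity I would run the usual order count. If $\Gamma$ is $(G,2)$-distance transitive, then $G$ is vertex-transitive, so $12=|V\Gamma|$ divides $|G|$ and $|G|=12\,|G_u|$; moreover $G_u$ is transitive on the $5$-element set $\Gamma(u)$, so $5$ divides $|G_u|$. Therefore $60$ divides $|G|$, while $|G|$ divides $|\Aut\Gamma|=120$, leaving only $|G|\in\{60,120\}$. If $|G|=120$ then $G=\Aut\Gamma=S_2\times A_5$. If $|G|=60$ then $G$ is a subgroup of index~$2$ in $S_2\times A_5$, and the key point is that this forces $G=A_5$: since $A_5$ is perfect, the abelianisation of $S_2\times A_5$ is $S_2$, so there is a unique surjection onto a group of order~$2$ and hence a unique subgroup of index~$2$, namely the kernel $1\times A_5\cong A_5$.

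The argument is essentially a counting plus a structural identification, and I anticipate no serious obstacle. The only two points requiring care are the transitivity of the $C_5$-stabiliser on $\Gamma(u)$ and $\Gamma_2(u)$, which rests on the concrete geometry of the icosahedron (or on its known distance-transitivity), and the uniqueness of the index-$2$ subgroup, which follows cleanly from the perfectness of $A_5$; both are light.
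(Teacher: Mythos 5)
Your proof is correct and follows essentially the same route as the paper: the girth-$3$ observation for non-$(G,2)$-arc transitivity, and the divisibility count ($12\mid |G|$, $5\mid |G_u|$, so $60\mid |G|\mid 120$) for necessity. The only difference is that you spell out what the paper leaves implicit---the explicit $C_5$-rotation argument behind ``it is easy to see'' that $A_5$ and $S_2\times A_5$ act $2$-distance transitively, and the uniqueness of the index-$2$ subgroup $1\times A_5$ via the perfectness of $A_5$---both of which are accurate.
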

\begin{proof}
  By \cite[Theorem 1.5]{DJLP-prime}, $\Aut\Gamma\cong S_2 \times A_5$.   It is easy to see that for $G\in\{S_2\times A_5,A_5\}$,
  $\Gamma$ is $(G,2)$-distance
  transitive.
Suppose that $\Gamma$ is $(G,2)$-distance transitive. 
Then $G$ is transitive on $V\Gamma$ and $G_u$ is transitive on $\Gamma(u)$, and
so   $12=|V\Gamma|$ divides $|G|$ and $|\Gamma(u)|=5$ divides $|G_u|$. Thus  60 divides $|G|$. Since $G\leq \Aut\Gamma\cong S_2 \times A_5$, it follows that  $G = S_2 \times A_5$ or $G=A_5$. Finally, as $\Gamma$ is a
non-complete graph of girth 3, $\Gamma$ is  not $(G,2)$-arc transitive.
\end{proof}

%
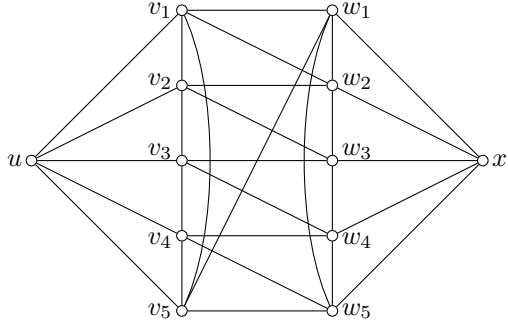
\begin{figure}[t]
\centering

\begin{tikzpicture}

\draw (-1,2) -- (1,4);
\draw (-1,2) -- (1,3);
\draw (-1,2) -- (1,2);
\draw (-1,2) -- (1,1);
\draw (-1,2) -- (1,0);

\draw (1,4) -- (3,4);
\draw (1,3) -- (3,3);
\draw (1,2) -- (3,2);
\draw (1,1) -- (3,1);
\draw (1,0) -- (3,0);

\draw (1,4) -- (3,3);
\draw (1,3) -- (3,2);
\draw (1,2) -- (3,1);
\draw (1,1) -- (3,0);
\draw (1,0) -- (3,4);

\draw (5,2) -- (3,4);
\draw (5,2) -- (3,3);
\draw (5,2) -- (3,2);
\draw (5,2) -- (3,1);
\draw (5,2) -- (3,0);

\draw (3,0) .. controls (2.5,1) and (2.5,3) .. (3,4);
\draw (3,0) -- (3,4);

\draw (1,0) .. controls (1.5,1) and (1.5,3) .. (1,4);
\draw (1,0) -- (1,4);

\filldraw[white] (1,4) circle (2pt);
\draw[black] (1,4) circle (2pt) node[left]{$v_1$};
\filldraw[white] (1,3) circle (2pt);
\draw[black] (1,3) circle (2pt) node[left, yshift = .05cm]{$v_2$};
\filldraw[white] (1,2) circle (2pt);
\draw[black] (1,2) circle (2pt) node[left, yshift = .12cm]{$v_3$};
\filldraw[white] (1,1) circle (2pt);
\draw[black] (1,1) circle (2pt) node[left, yshift = -.06cm]{$v_4$};
\filldraw[white] (1,0) circle (2pt);
\draw[black] (1,0) circle (2pt) node[left]{$v_5$};

\filldraw[white] (3,4) circle (2pt);
\draw[black] (3,4) circle (2pt) node[right]{$w_1$};
\filldraw[white] (3,3) circle (2pt);
\draw[black] (3,3) circle (2pt) node[right, yshift = .05cm]{$w_2$};
\filldraw[white] (3,2) circle (2pt);
\draw[black] (3,2) circle (2pt) node[right, yshift = .12cm]{$w_3$};
\filldraw[white] (3,1) circle (2pt);
\draw[black] (3,1) circle (2pt) node[right, yshift = -.06cm]{$w_4$};
\filldraw[white] (3,0) circle (2pt);
\draw[black] (3,0) circle (2pt) node[right]{$w_5$};

\filldraw[white] (-1,2) circle (2pt);
\draw[black] (-1,2) circle (2pt) node[left]{$u$};
\filldraw[white] (5,2) circle (2pt);
\draw[black] (5,2) circle (2pt) node[right]{$x$};

\end{tikzpicture}
\caption{The icosahedron, displayed according to its distance-partition.}\label{fig:Icosahedron}
\end{figure}

\section{Graphs of girth $4$}\label{sect:girth4}
By the assertion of Lemma \ref{lem:girth5}, to study the family of
$(G,2)$-distance transitive, but not $(G,2)$-arc transitive graphs,  we
only need to consider the graphs with girth $3$ or $4$.  This section
is devoted to the  girth  $4$ case, and the structure of
such graphs depends strongly
upon the value of the constant $c_2$ as in Definition~\ref{definition: intersectionarray}.
We begin with a simple combinatorial result:

\begin{lemma}\label{lem:sizeofgamma2}
Let $\Gamma$ be a $(G,2)$-distance transitive graph with valency $k$ and girth at least $4$. Let $u\in V\Gamma$. Then there are $k(k-1)$ edges between $\Gamma(u)$ and $\Gamma_2(u)$, and  $k(k-1) = c_2 |\Gamma_2(u)|$.
\end{lemma}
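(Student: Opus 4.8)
The plan is to count, in two different ways, the number of edges joining $\Gamma(u)$ to $\Gamma_2(u)$, and to read off the identity by equating the two counts.

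First I would exploit the girth hypothesis to pin down where the neighbours of a vertex $v\in\Gamma(u)$ can lie. Since $\Gamma$ has girth at least $4$ it contains no triangle, so no two neighbours of $u$ are adjacent; in particular $\Gamma(u)$ is an independent set. Now fix $v\in\Gamma(u)$ and let $w$ be any of its $k-1$ neighbours other than $u$. Then $d_\Gamma(u,w)\leq d_\Gamma(u,v)+d_\Gamma(v,w)=2$, while $w\neq u$ and $w\notin\Gamma(u)$ (otherwise $u,v,w$ would form a triangle), which forces $w\in\Gamma_2(u)$. Hence each $v\in\Gamma(u)$ contributes exactly $k-1$ edges to $\Gamma_2(u)$, and since $|\Gamma(u)|=k$ the total number of edges between the two sets equals $k(k-1)$.

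For the second count I would invoke the definition of the intersection constant. By Definition~\ref{definition: intersectionarray}, every vertex $w\in\Gamma_2(u)$ is joined to $\Gamma_{2-1}(u)=\Gamma(u)$ by exactly $c_2$ edges, this number being independent of the choice of $w$ by $(G,2)$-distance transitivity (recall that these constants are well defined for $i=1,2$). Summing over $\Gamma_2(u)$ therefore gives $c_2\,|\Gamma_2(u)|$ edges between $\Gamma(u)$ and $\Gamma_2(u)$, and equating the two counts yields $k(k-1)=c_2\,|\Gamma_2(u)|$, as required.

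There is no serious obstacle here: the only substantive point is the girth argument, which guarantees that every neighbour of a vertex in $\Gamma(u)$ other than $u$ actually lands in $\Gamma_2(u)$ rather than back in $\Gamma(u)$ — this is precisely what the absence of triangles delivers. Once that observation is in place, the statement is a one-line double-counting identity, and both the edge count $k(k-1)$ and the equality with $c_2\,|\Gamma_2(u)|$ follow immediately.
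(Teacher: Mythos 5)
Your proof is correct and follows essentially the same route as the paper: count the $k(k-1)$ edges from the $\Gamma(u)$ side using the absence of triangles to place every neighbour $w\neq u$ of $v\in\Gamma(u)$ in $\Gamma_2(u)$, then count from the $\Gamma_2(u)$ side via the constant $c_2$ and equate. Your write-up is somewhat more explicit than the paper's (spelling out the triangle-freeness argument and the well-definedness of $c_2$), but there is no substantive difference.
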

\begin{proof}
Consider a vertex $v \in \Gamma(u)$. Since $\Gamma$ has girth more than $3$, all of the neighbors of $v$, except for $u$, lie in $\Gamma_2(u)$. Thus, there are $k-1$ edges from $v$ to $\Gamma_2(u)$. Since there are $k$ such vertices $v$, there are $k(k-1)$ edges between $\Gamma(u)$ and $\Gamma_2(u)$.
As $\Gamma$ is $(G,2)$-distance transitive,  the equation $k(k-1) = c_2 |\Gamma_2(u)|$ follows by counting the same quantity from the other side: each vertex in $\Gamma_2(u)$ is incident with exactly $c_2$ edges between $\Gamma_2(u)$ and $\Gamma(u)$.
\end{proof}


For a vertex $u\in\VGamma$, we denote by $G_u^{\Gamma_i(u)}$
the permutation group induced by $G_u$ on $\Gamma_i(u)$.

\begin{lemma}\label{lem:c_2=2}
  Let $\Gamma$ be a $(G,2)$-distance transitive, but not $(G,2)$-arc transitive graph with valency $k$ and girth $4$, and suppose that $c_2=2$.
  Then $G_u$   acts $2$-homogeneously, but not $2$-transitively on
  $\Gamma(u)$ for each  $u\in \VGamma$. Further,
  $k=p^e\equiv 3 \pmod 4$ where $p$ is a prime.
\end{lemma}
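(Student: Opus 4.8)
The plan is to establish the two-part conclusion by analysing the action of $G_u$ on the neighbourhood $\Gamma(u)$, using the hypothesis $c_2 = 2$ to control how pairs of neighbours behave. First I would recall that since $\Gamma$ has girth $4$, there are no triangles, so distinct neighbours $v, w \in \Gamma(u)$ are non-adjacent, and any common neighbour of $v$ and $w$ other than $u$ lies in $\Gamma_2(u)$. The key structural observation is that $c_2 = 2$ means each vertex $x \in \Gamma_2(u)$ has exactly two neighbours in $\Gamma(u)$. Thus every $x \in \Gamma_2(u)$ determines an unordered pair $\{v,w\}$ of distinct vertices of $\Gamma(u)$ (the two edges from $x$ back towards $u$), giving a well-defined map from $\Gamma_2(u)$ to the set of $2$-subsets of $\Gamma(u)$.

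Next I would use $(G,2)$-distance transitivity. Since $G_u$ is transitive on $\Gamma_2(u)$ and the map $x \mapsto \{v,w\}$ described above is $G_u$-equivariant, the image of this map is a single $G_u$-orbit on the $2$-subsets of $\Gamma(u)$. I would then argue that this map is in fact surjective onto \emph{all} $2$-subsets: given any pair $\{v,w\}$ of neighbours, since $\Gamma$ is connected with girth exactly $4$ and $c_2 = 2$ governs the common-neighbour structure, the transitivity of $G_u$ on $\Gamma(u)$ combined with transitivity on $\Gamma_2(u)$ forces every $2$-subset to arise. Concretely, $G_u$ transitive on $\Gamma(u)$ together with a single orbit on the realised pairs, plus the fact that at least one pair is realised (girth $4$ guarantees a $4$-cycle, hence some $x$ at distance $2$ with two neighbours in $\Gamma(u)$), yields that $G_u$ is transitive on the set of all $2$-subsets of $\Gamma(u)$. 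This is exactly the definition of $G_u$ acting $2$-homogeneously on $\Gamma(u)$.

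To obtain the ``not $2$-transitive'' half, I would invoke the hypothesis that $\Gamma$ is \emph{not} $(G,2)$-arc transitive. By the standard fact recorded in Section~\ref{sect:def}, $(G,2)$-arc transitivity is equivalent to $G_u$ being $2$-transitive on $\Gamma(u)$ for all $u$. Hence the failure of $(G,2)$-arc transitivity gives precisely that $G_u$ is not $2$-transitive on $\Gamma(u)$. Combined with the previous paragraph, $G_u^{\Gamma(u)}$ is $2$-homogeneous but not $2$-transitive, as claimed.

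Finally, for the arithmetic conclusion I would simply apply Lemma~\ref{2dt-2homonot2t} to the permutation group $G_u^{\Gamma(u)}$ of degree $k = |\Gamma(u)|$. That lemma states that a $2$-homogeneous but not $2$-transitive group of degree $n$ satisfies $n = p^e \equiv 3 \pmod 4$ for a prime $p$, which immediately gives $k = p^e \equiv 3 \pmod 4$. The main obstacle I anticipate is the surjectivity step in the second paragraph: making rigorous that every $2$-subset of $\Gamma(u)$ is realised as the ``base pair'' of some distance-$2$ vertex. The cleanest way is probably to fix the realised pair $\{v_0, w_0\}$ coming from some $4$-cycle, then push it around using transitivity of $G_u$ on $\Gamma(u)$ to hit one element of an arbitrary target pair, and use the single-orbit property together with a counting or stabiliser argument (comparing $|\Gamma_2(u)|$, $c_2 = 2$, and $\binom{k}{2}$) to conclude all pairs are covered; one must rule out the degenerate possibility that only a proper $G_u$-invariant subset of pairs occurs.
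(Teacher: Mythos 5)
Your overall skeleton is the same as the paper's: encode each $x\in\Gamma_2(u)$ as the $2$-subset $\Gamma(x)\cap\Gamma(u)$ (well defined since $c_2=2$), observe the map $\psi$ is $G_u$-equivariant, deduce $2$-homogeneity, get ``not $2$-transitive'' from the failure of $(G,2)$-arc transitivity, and finish with Lemma~\ref{2dt-2homonot2t}. But there is a genuine gap exactly where you anticipated one, and the argument you sketch to close it would fail. You claim that transitivity of $G_u$ on $\Gamma(u)$, together with the image of $\psi$ being a single $G_u$-orbit, ``forces every $2$-subset to arise.'' It does not: a point-transitive group can have many orbits on $2$-subsets (e.g.\ a cyclic group of prime order $p\geq 5$ in its regular action), so a single realised orbit of pairs can perfectly well be a proper $G_u$-invariant subset --- which is precisely the degenerate possibility you note you must exclude, and never do. Your fallback, a counting argument comparing $|\Gamma_2(u)|$ with $\binom{k}{2}$, is indeed the paper's route, but it is incomplete as you state it: Lemma~\ref{lem:sizeofgamma2} with $c_2=2$ gives $|\Gamma_2(u)|=k(k-1)/2=\binom{k}{2}$, and from equal cardinalities you can conclude surjectivity of $\psi$ only if you first prove $\psi$ is \emph{injective} --- a step your proposal never mentions. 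The paper proves injectivity from $c_2=2$: if $\psi(w_1)=\psi(w_2)=\{v_1,v_2\}$ with $w_1\neq w_2$, then $u,w_1,w_2$ are three common neighbours of $v_1$ and $v_2$, while girth $4$ forces $d_\Gamma(v_1,v_2)=2$ and hence exactly $c_2=2$ common neighbours, a contradiction.

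It is worth noting that the gap also admits a direct repair needing no counting and no group theory: given any $2$-subset $\{v,w\}\subseteq\Gamma(u)$, girth $4$ makes $v$ and $w$ non-adjacent, and $u$ is a common neighbour, so $d_\Gamma(v,w)=2$ and $v,w$ have exactly $c_2=2$ common neighbours; the one other than $u$ cannot lie in $\Gamma(u)$ (that would create a triangle), so it lies in $\Gamma_2(u)$ and maps to $\{v,w\}$ under $\psi$. Either this one-line surjectivity argument, or the paper's injectivity-plus-counting argument, is what your proof is missing; the remaining steps (the equivalence of $2$-arc transitivity with $2$-transitivity of $G_u^{\Gamma(u)}$, and the application of Lemma~\ref{2dt-2homonot2t}) are carried out correctly and match the paper.
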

\begin{proof}
Since  $c_2=2$, each vertex $w\in\Gamma_2(u)$ uniquely determines a
$2$-subset in $\Gamma(u)$, namely the intersection $\Gamma(w) \cap
\Gamma(u)$. We claim that the  map $\psi:w\mapsto\Gamma(w)\cap\Gamma(u)$
is a bijection between $\Gamma_2(u)$ and the set of 2-subsets of $\Gamma(u)$.
Suppose that $\psi(w_1)=\psi(w_2)=\{v_1,v_2\}$. Then
$u,\ w_1,\ w_2\in\Gamma(v_1)$ and $v_2\in\Gamma_2(v_1)$. On the other hand,
as $v_2$ is adjacent to $u,\ w_1,\ w_2$,
there are three edges from $v_2$ to $\Gamma(v_1)$, which is impossible,
as $c_2=2$. Hence $\psi$ is injective.
Since $\Gamma$ has girth 4, it follows from Lemma
\ref{lem:sizeofgamma2} that  $|\Gamma_2(u)| =
k(k-1)/2=\binom{k}{2}$,  and so the map $\psi$ is a bijection.
Hence $G_u$
is transitive on $\Gamma_2(u)$ if and only if it is transitive on the
set of $2$-subsets in $\Gamma(u)$, that is, $G_u^{\Gamma(u)}$ acts
$2$-homogeneously on $\Gamma(u)$. Since $\Gamma$ is not $(G,2)$-arc
transitive, $G_u^{\Gamma(u)}$ is not
$2$-transitive on $\Gamma(u)$. Thus by   Lemma \ref{2dt-2homonot2t},
$k=p^e\equiv 3 \pmod 4$ where $p$ is a prime.
\end{proof}

In the following lemma we characterize $(G,2)$-distance transitive,
but not $(G,2)$-arc transitive Hamming graphs over an alphabet of size $2$.

\begin{lemma}\label{lem:cube1}
Let $\Gamma = \Hamming(d,2)$ with $d > 2$, 
and let $G  \leqslant\Aut\Gamma \cong S_2\wr S_{d}$. 
Then $\Gamma$ is $(G,2)$-distance transitive, but not $(G,2)$-arc transitive 
if and only if $G=S_2\wr H$ where $H$ is a $2$-homogeneous,
but not $2$-transitive subgroup of $S_d$.
Further, in this case, $d=p^e\equiv 3 \pmod 4$.
\end{lemma}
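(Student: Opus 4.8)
The plan is to coordinatise $\VGamma=(\F 2)^d$ so that adjacency means ``differ in exactly one coordinate'', and to realise $\Aut\Gamma=S_2\wr S_d$ as the group of all maps $x\mapsto\sigma(x)+\epsilon$, where $\sigma\in S_d$ permutes coordinates and $\epsilon\in B:=(\F 2)^d$ is a translation. Fixing $u=0$, its neighbours are the weight-one vectors $e_1,\dots,e_d$ and $\Gamma_2(u)$ consists of the weight-two vectors $e_i+e_j$; a direct count gives valency $k=d$, girth $4$, and $c_2=2$ (a weight-two vector shares exactly two neighbours with $u$). Hence Lemma~\ref{lem:c_2=2} applies: since $\Gamma$ is not $(G,2)$-arc transitive, $G_u^{\Gamma(u)}$ is $2$-homogeneous but not $2$-transitive and $k=d=p^e\equiv 3\pmod 4$, which already proves the final ``Further'' assertion.

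For the ``if'' direction I would argue directly. If $G=S_2\wr H$ then $B\le G$ acts regularly on $\VGamma$, so $G$ is vertex transitive; moreover $G_u=G\cap S_d=H$, acting on $\Gamma(u)=\{e_1,\dots,e_d\}$ exactly as $H$ acts on coordinates and on $\Gamma_2(u)=\{e_i+e_j\}$ as $H$ acts on the $2$-subsets of $\{1,\dots,d\}$. As $H$ is $2$-homogeneous it is transitive on points and on $2$-subsets, so $G_u$ is transitive on both $\Gamma(u)$ and $\Gamma_2(u)$ and $\Gamma$ is $(G,2)$-distance transitive; as $H$ is not $2$-transitive, $G_u$ is not $2$-transitive on $\Gamma(u)$, so $\Gamma$ is not $(G,2)$-arc transitive.

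The substance is the ``only if'' direction, which I would organise as a proof that $B\le G$. The stabiliser of $u=0$ in $\Aut\Gamma$ is precisely $S_d$, so $H:=G_u=G\cap S_d$ is a subgroup of $S_d$ acting faithfully and, by the first paragraph, $2$-homogeneously but not $2$-transitively on coordinates; in particular $|H|$ is odd by Lemma~\ref{2dt-2homonot2t}. Write $K=G\cap B$ and $\bar H=G\pi\le S_d$ for the image under the projection $\pi\colon S_2\wr S_d\to S_d$. Then $K$ is an $\bar H$-invariant subspace of $(\F 2)^d$, the restriction of $\pi$ embeds $H$ into $\bar H$ (since $H\cap K=1$), and orbit--stabiliser gives $|G|=2^d|H|$, whence $|K|\cdot[\bar H:H]=2^d$. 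Thus $G=S_2\wr H$ is equivalent to $K=B$, equivalently $\bar H=H$; so everything reduces to showing that the projection of $G$ is no larger than the point stabiliser.

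The hard part is exactly this last step. Since $|H|$ is odd while $[\bar H:H]=2^d/|K|$ is a power of $2$, a proper $K$ forces $|\bar H|$ to be even; as $\bar H\supseteq H$ is again $2$-homogeneous, Lemma~\ref{2dt-2homonot2t} then forces $\bar H$ to be $2$-transitive. Because the permutation module $(\F 2)^d$ of the transitive group $\bar H$ has a unique invariant hyperplane, namely the sum-zero subspace $V_0$ (the only invariant functional being the weight map), the critical possibility to exclude is $K=V_0$ with $[\bar H:H]=2$, i.e. $H$ of index $2$ in a $2$-transitive group $\bar H$ of degree $p^e\equiv 3\pmod 4$ while remaining $2$-homogeneous but not $2$-transitive. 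I would rule this out using the classification of $2$-homogeneous affine groups: such groups have the translation group as socle, so $\bar H\le\AGammaL(e,p)$, and comparing the orders and parities of the point stabilisers of $H$ and $\bar H$ should contradict $[\bar H:H]=2$, forcing $\bar H=H$ and $K=B$. I expect this parity/classification analysis to be the genuine obstacle, and it cannot be purely formal: for $d=3$ the configuration $K=V_0$ really does occur (yielding the rotation group $S_4$ of the cube), which is precisely the exceptional case $c_2=k-1$ of Theorem~\ref{thm:valency stuff}(i), where $\Hamming(3,2)\cong\gridcomp 4$ is instead described through the grid-complement analysis of Lemma~\ref{lem:gridcomp}. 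Thus the exclusion must genuinely invoke $d\ge 4$, equivalently $c_2=2<k-1$.
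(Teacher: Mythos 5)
Your structural analysis is actually sharper than the paper's own proof on exactly the point you flag. The paper's argument for the ``only if'' direction reads: ``if $g\in G$, then $g=mh$ where $m\in M$ and $h\in S_d$, and so $h\in H$'' --- but $h$ only lies in the stabiliser $A_u=S_d$ of $u$ in $\Aut\Gamma$, not necessarily in $G$, so the containment $G\leq MH$ (from which the paper deduces $M\leq G$ via Dedekind's law) is precisely the assertion $\bar H=H$ that you correctly isolated as the genuine content; the paper simply assumes it. Your ``if'' direction and your use of Lemmas~\ref{lem:c_2=2} and~\ref{2dt-2homonot2t} agree with the paper, and your observation that the $d=3$ rotation group of the cube realises $K=V_0$ is correct.

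The gap is that your plan for closing the hard step cannot succeed, because the configuration $K=V_0$, $[\bar H:H]=2$ exists for \emph{every} admissible degree, not only $d=3$. Let $d=q=p^e\equiv 3\pmod 4$, take $\bar H=\AGL(1,q)$, which is $2$-transitive of order $q(q-1)\equiv 2\pmod 4$ (as $(q-1)/2$ is odd), and let $H=\F{q}\rtimes C_{(q-1)/2}$ be its unique index-$2$ subgroup: $H$ has odd order and is $2$-homogeneous but not $2$-transitive, since $-1$ is a non-square in $\F{q}$. Let $\phi\colon\bar H\to \F{2}$ have kernel $H$ and set $G=\{(m,h)\in(\F{2})^d\rtimes\bar H \mid \mathrm{wt}(m)\equiv\phi(h)\pmod 2\}$. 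Coordinate permutations preserve weight parity, so $G$ is a subgroup; $\phi$ is surjective, so $G$ is vertex transitive; and $G_u=H$, acting on $\Gamma(u)$ as $H$ on points and on $\Gamma_2(u)$ as $H$ on $2$-subsets. Hence $\Gamma=\Hamming(q,2)$ is $(G,2)$-distance transitive but not $(G,2)$-arc transitive, yet $G\cap B=V_0\neq B$, so $G\neq S_2\wr H'$ for any $H'$ (and since $B$ is normal in $\Aut\Gamma$, no conjugation repairs this). For $d=7$ this is $\bar H=\AGL(1,7)$, $H=\F{7}\rtimes C_3$. So the parity/classification contradiction you expect for $d\geq 4$ does not exist: the ``only if'' direction --- and Lemma~\ref{lem:cube1} as stated --- is false, your cube example being the first member of an infinite family rather than an exception tied to $c_2=k-1$. (A secondary, smaller gap: your reduction to the hyperplane case ignores invariant subspaces of codimension at least $2$; the uniqueness of the invariant hyperplane does not by itself exclude $[B:K]=2^a$ with $a\geq 2$, although order estimates typically do.) Any correct version of the lemma must admit, alongside $S_2\wr H$, these index-$2$ ``even-weight twisted'' subgroups attached to pairs $H\leq\bar H$ as above.
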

\begin{proof}
  By  \cite[p.~222]{BCN}, $\Gamma$ is $\Aut \Gamma$-distance
  transitive of girth 4, valency $d$, and $c_2=2$.
 Assume that the action of $G$
  on  $\Gamma$ is 2-distance transitive, but not $2$-arc
  transitive.   Then by Lemma~\ref{lem:c_2=2},  $G_u$ is 2-homogeneous, but not $2$-transitive
  on $\Gamma(u)$, for all $u$.
  Further,
$d=p^e\equiv 3 \pmod 4$
  where $p$ is a prime.
  Let
  $A=\Aut\Gamma=M\rtimes S_d$ where $M=(S_2)^d$.
  Let $u$ be the vertex $(1,\ldots,1)$ and set $H=G_u$.
  If $g\in G$, then $g=mh$ where $m\in M$ and $h\in S_d$, and
  so $h\in H$. Hence $G\leq MH$. 
  Then, by   Dedekind's Modular Law, $(G\cap M)H=G\cap (MH)=G$.
  Thus $G\cap M$ is a transitive subgroup of $G$. Since $M$ is regular,
 $G\cap M=M$, and so $M\leq G$. Thus
  $G=M\rtimes H=S_2\wr H$. As the action of $H$ on
  $\Gamma(u)$ is faithful,  $H=G_u^{\Gamma(u)}$.

 Conversely, assume that $G=S_2\wr H$ and $H$ is a $2$-homogeneous,
 but not $2$-transitive subgroup of $S_d$. Then
$G$ is transitive on $V\Gamma$. Since $G_u^{\Gamma(u)}=G_u=H$, 
$G_u^{\Gamma(u)}$ acts $2$-homogeneously,
but not $2$-transitively on $\Gamma(u)$ for each  $u\in \VGamma$. Hence $\Gamma$ is  not $(G,2)$-arc transitive and $G_u^{\Gamma(u)}$ is transitive on the set of 2-subsets of $\Gamma(u)$. Since
$\Gamma$ has  girth 4 and $c_2=2$,
we can construct  a one-to-one correspondence between the 2-subsets of
$\Gamma(u)$ and vertices of $\Gamma_2(u)$ as in the proof
of Lemma~\ref{lem:c_2=2}.
Thus $G_u$ is transitive on  $\Gamma_2(u)$,
so $\Gamma$ is $(G,2)$-distance transitive.
\end{proof}

We have treated the case where $c_2=2$. When $c_2$ is `large' (that is, close to the valency) we can say a lot about the structure of $\Gamma$.

\begin{lemma}\label{lem:c_2=k}
  If $\Gamma$ is a connected
  $(G,2)$-distance transitive graph with valency $k$
  and girth $4$, then the following are valid.
  \begin{enumerate}
  \item If $c_2=k$, then $\Gamma = \K_{k,k}$.
    \item If  $k\geq 3$ and $c_2=k-1$, then $\Gamma = \gridcomp{(k+1)}$.
    \end{enumerate}
\end{lemma}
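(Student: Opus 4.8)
The plan is to work with the distance partition $\{u\},\,\Gamma(u),\,\Gamma_2(u),\,\dots$ from a fixed vertex $u$, exploiting that girth $4$ forces $a_1=0$ (no edges inside $\Gamma(u)$), that the intersection numbers satisfy $a_i+b_i+c_i=k$, and that $k(k-1)=c_2\,|\Gamma_2(u)|$ by Lemma~\ref{lem:sizeofgamma2}. Throughout I use that $c_2$ (and $a_2,b_2$) are genuine constants, the same from every base vertex by vertex-transitivity, so the size formula applies verbatim to any chosen base point.

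For part (i), since $c_2=k=|\Gamma(u)|$, every $w\in\Gamma_2(u)$ is joined to all of $\Gamma(u)$, and as the valency is $k$ this forces $\Gamma(w)=\Gamma(u)$. From $a_2+b_2=k-c_2=0$ I get $a_2=b_2=0$, so $\Gamma$ has diameter $2$ and $|\Gamma_2(u)|=k-1$. Then $A:=\{u\}\cup\Gamma_2(u)$ and $B:=\Gamma(u)$ are independent sets of size $k$ (using $a_1=a_2=0$ and that $u$ has no neighbor in $\Gamma_2(u)$), and every vertex of $A$ is adjacent to every vertex of $B$; hence $\Gamma\cong\K_{k,k}$.

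For part (ii) I first set up a bijection. Since $c_2=k-1$, each $w\in\Gamma_2(u)$ misses exactly one vertex $f(w)\in\Gamma(u)$; counting the non-neighbors in $\Gamma_2(u)$ of a fixed $v\in\Gamma(u)$ (it has exactly one, because the $k-1$ edges of $v$ other than $uv$ all land in $\Gamma_2(u)$, which has size $k$) shows $f\colon\Gamma_2(u)\to\Gamma(u)$ is a bijection. Writing $\Gamma(u)=\{v_1,\dots,v_k\}$ and $\Gamma_2(u)=\{w_1,\dots,w_k\}$ with $f(w_i)=v_i$, the inter-layer edges are exactly $w_i\sim v_j\iff i\neq j$, i.e.\ a copy of $\K_{k,k}$ with a perfect matching deleted. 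Next I pin down $a_2,b_2$, where $a_2+b_2=1$. The key step is to exclude $a_2=1$: in that case adjacency inside $\Gamma_2(u)$ is a fixed-point-free matching $w_i\sim w_{\sigma(i)}$, and choosing any $j\notin\{i,\sigma(i)\}$ (possible since $k\geq 3$) the three vertices $v_j,w_i,w_{\sigma(i)}$ form a triangle, contradicting girth $4$. Hence $a_2=0$, $b_2=1$, and each $w_i$ has a unique neighbor $x_i\in\Gamma_3(u)$.

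Finally I collapse the third layer to a point. Applying the size formula at the base vertex $v_1$ gives $|\Gamma_2(v_1)|=k$; but $\Gamma_2(v_1)$ contains the $k-1$ vertices $v_2,\dots,v_k$ together with every $x_i$ for $i\geq 2$ (each lies at distance $2$ from $v_1$ via $w_i$, and cannot be adjacent to $v_1$ without forcing $d(u,x_i)\leq 2$). Since the $x_i$ are distinct from the $v_j$, counting forces all $x_i$ with $i\geq 2$ to coincide; repeating with base vertex $v_2$ identifies $x_1$ with them as well, so a single vertex $x$ is adjacent to all of $\Gamma_2(u)$, giving $\Gamma_3(u)=\{x\}$ and $\Gamma_4(u)=\emptyset$. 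The resulting graph on $2k+2$ vertices is then identified with $\gridcomp{(k+1)}$ by $u\mapsto(1,1)$, $x\mapsto(2,1)$, $v_i\mapsto(2,i+1)$, $w_i\mapsto(1,i+1)$, under which all adjacencies match. The two steps I expect to be the crux are the triangle argument ruling out $a_2=1$ and the counting at $v_1$ (and $v_2$) that forces $\Gamma_3(u)$ to be a single vertex; the rest is bookkeeping of the intersection numbers and the final isomorphism.
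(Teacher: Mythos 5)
Your proof is correct, and part (i) is essentially the paper's own argument (every $w\in\Gamma_2(u)$ satisfies $\Gamma(w)=\Gamma(u)$, forcing the bipartition and $\Gamma=\K_{k,k}$). In part (ii) you follow the same skeleton as the paper --- fix the distance partition from $u$, get $|\Gamma_2(u)|=k$, show $[\Gamma_2(u)]$ is edgeless so $a_2=0,\ b_2=1$, then collapse $\Gamma_3(u)$ to a single vertex --- but you execute the two crux steps by genuinely different mechanisms. To exclude edges in $\Gamma_2(u)$, the paper argues that the $k-1$ vertices of $\Gamma_2(u)\cap\Gamma(v)$ form an independent set whose internal edges would all have to land on the unique non-neighbor $w'$ of $v$, producing a degree overflow at $w'$; you instead note that $a_2=1$ would make $[\Gamma_2(u)]$ a perfect matching inside your explicit ``$\K_{k,k}$ minus a matching'' description of the inter-layer edges, whence the triangle $v_j,w_i,w_{\sigma(i)}$ for any $j\notin\{i,\sigma(i)\}$ (available as $k\geq 3$) contradicts girth $4$ --- shorter, and it uses only the constancy of $a_2$ rather than the transitivity of $G_u$ on $\Gamma_2(u)$. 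To show $\Gamma_3(u)$ is a single vertex adjacent to all of $\Gamma_2(u)$, the paper applies $c_2=k-1$ to the distance-two pairs $(v,z)$ and $(v',z)$ and spreads the adjacency of $z$ across $\Gamma_2(u)$; you instead re-base the count $k(k-1)=c_2|\Gamma_2(\cdot)|$ at $v_1$ and then at $v_2$ (legitimate by vertex-transitivity, which you correctly flag up front) and force the third-layer neighbors $x_i$ to coincide. Both mechanisms are sound; the paper's stays entirely local to the one distance partition, while your global recount makes the endgame mechanical and lets you finish with an explicit vertex map onto $\gridcomp{(k+1)}$, which the paper leaves implicit. The small points you leave implicit (that $b_2=1$ forces $\Gamma_3(u)\neq\emptyset$, and that $\Gamma(x)=\Gamma_2(u)$ by valency gives $\Gamma_4(u)=\emptyset$) are immediate and do not constitute gaps.
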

\begin{proof}
(i) Let $(u,v,w)$ be a $2$-arc. Since  $\Gamma$ has girth 4, $u$ and
  $w$ are nonadjacent, so  $w$ has $k$ neighbors in $\Gamma(u)$, as
  $c_2=k$. Since the valency of $\Gamma$ is $k$, this forces
  $\Gamma(u) = \Gamma(w)$. By the  $(G,2)$-distance transitivity of
  $\Gamma$,  every vertex in $\Gamma_2(u)$ has all its neighbors
in  $\Gamma(u)$, and this implies that $\Gamma_3(u)$ is empty and there
  are no edges in $\Gamma_2(u)$.  Thus  $\Gamma$ is a bipartite graph
  and  the two biparts are  $\Gamma(u)$ and $\{ u \} \cup
  \Gamma_2(u)$. Every edge between the two biparts is present, so
  $\Gamma$ is a complete bipartite graph. Since $\Gamma$ is regular of
  valency $k$, we  have  $\Gamma = \K_{k,k}$.

(ii) Let $(u,v,w)$ be a 2-arc. Since $\Gamma$ has girth 4 and
  $c_2=k-1$, by Lemma~\ref{lem:sizeofgamma2}, we have $|\Gamma_2(u)| =
  k$. 
  Let $w'$ be the unique vertex in $\Gamma_2(u)$ that is not
  adjacent to $v$.
Assume that the induced subgraph $[\Gamma_2(u)]$ contains
an edge. As $G_u$ is transitive on $\Gamma_2(u)$, every vertex of
$\Gamma_2(u)$  is adjacent to some vertex of
$\Gamma_2(u)$. Since $\Gamma$ has girth 4, the $k-1$ vertices in
$\Gamma_2(u)\cap \Gamma(v)$ are pairwise nonadjacent, so every vertex
of $\Gamma_2(u)\cap \Gamma(v)$  is adjacent to $w'$, which is
impossible, as $|\Gamma(u)\cap \Gamma(w')|=k-1$. Thus  there are no
edges in $[\Gamma_2(u)]$. Thus  each vertex in
$\Gamma_2(u)$ is adjacent to a unique vertex in $\Gamma_3(u)$.

Let $z\in \Gamma_3(u)\cap \Gamma(w)$. Since $c_2=k-1$,  every pair of
vertices at distance $2$ have $k-1$ common neighbors, so
$|\Gamma(v)\cap \Gamma(z)|=k-1$. Hence $z$ is adjacent to all vertices
of $\Gamma_2(u)$ that are adjacent to $v$.
If for all $v'\in \Gamma(u)$, $\Gamma_2(u)\cap
  \Gamma(v)=\Gamma_2(u)\cap \Gamma(v')$, then $|\Gamma_2(u)|=k-1$,
  which is a contradiction. Thus $\Gamma(u)$ contains a vertex $v'$
  such that $\Gamma_2(u)\cap \Gamma(v)\neq \Gamma_2(u)\cap
  \Gamma(v')$. In particular, $\Gamma_2(u)= \Gamma_2(u)\cap
  (\Gamma(v)\cup \Gamma(v'))$.
Now $v'$ and $z$ must have a
common neighbor in $\Gamma_2(u)$, and so $v'$ and $z$ are
at distance $2$. Thus, as $c_2=k-1$, $z$ is adjacent to
all vertices of $\Gamma_2(u)$ that are adjacent to $v'$.
Thus $z$ is adjacent to all vertices of $\Gamma_2(u)$. Since
$|\Gamma_2(u)|=k$, we find that there are no more vertices in $\Gamma$.
Therefore,  we have determined $\Gamma$
completely, and $\Gamma = \gridcomp{(k+1)}$.
\end{proof}




\section{Proof of Main Results}\label{sect:proofs}

We  first prove Theorem~\ref{thm:valency stuff}.

\begin{proof}[{\bf Proof of Theorem \ref{thm:valency stuff}}]
Since $\Gamma$ has  girth $4$, it follows that $2\leq c_2\leq k$.
If $c_2=k$, then,  by Lemma
\ref{lem:c_2=k},  $\Gamma = \K_{k,k}$. However, by Lemma
\ref{lem:complete bipartite},  $\Gamma$ is $(G,2)$-arc transitive,
whenever it is $(G,2)$-distance transitive,
and hence this case cannot arise.
Thus  $2\leq c_2\leq k-1$.  Statement~(i) now follows from
Lemmas~\ref{lem:c_2=k}(ii) and~\ref{lem:gridcomp},
while statement~(ii) follows from
Lemma~\ref{lem:c_2=2} 
\end{proof}

Next we prove Corollary~\ref{thm:primeval}.

\begin{proof}[{\bf Proof of Corollary \ref{thm:primeval}}]
If $p=2$, then $\Gamma$ is a cycle graph, so $\Gamma$ is
$(G,2)$-distance transitive if and only if it is $(G,2)$-arc
transitive, which is a contradiction. Thus $p\geq 3$. Then by Theorem
\ref{thm:valency stuff}, either $\Gamma \cong \gridcomp{(p+1)}$, or
$2\leq c_2 \leq p-2$. Assume that  $2\leq c_2 \leq p-2$. It follows
from Lemma \ref{lem:sizeofgamma2} that $p(p-1) = c_2
|\Gamma_2(u)|$. Since $2\leq c_2 \leq p-2$,   $p$ and $c_2$ are
coprime, so $c_2$ divides $p-1$. As $c_2<p-1$, we get $2\leq c_2\leq
(p-1)/2$ and this proves~(i). Statement~(ii) follows from
Theorem \ref{thm:valency stuff}(ii).
Assume that $c_2=
(p-1)/2$.  By Lemma~\ref{lem:sizeofgamma2},
$|\Gamma_2(u)|=2p$.  If $G_u$ were primitive on
$\Gamma_2(u)$, then by Lemma \ref{val-2p-1}, we would have,
$p=5$, and hence $c_2=2$. However, In this case
 $p\equiv 3 \pmod 4$, which is a contradiction.
Thus  $G_u$ is imprimitive on $\Gamma_2(u)$ and this shows~(iii).
\end{proof}

One can form  an infinite family of examples 
that satisfy the conditions of Corollary \ref{thm:primeval}
from Hamming graphs $\Hamming(p,2)$ using Lemma~\ref{lem:cube1}.

In the following, we prove Theorem \ref{thm:small val}, that is,
we determine all $(G,2)$-distance transitive, but not $(G,2)$-arc
transitive graphs of valency at most $5$. We split the proof into
two parts, as we consider the girth 4 and~3 cases separately
in Propositions~\ref{lem:valency 3} and~\ref{2dtval4-girth3}, respectively.


\begin{proposition}\label{lem:valency 3}
  Let $\Gamma$ be a connected $(G,2)$-distance transitive, but not
  $(G,2)$-arc transitive graph of girth $4$ and valency $k\in\{3,4,5\}$. Then
    $\Gamma \cong \gridcomp{k+1}$, and $G$ satisfies
    Condition \ref{gridcondition}.
\end{proposition}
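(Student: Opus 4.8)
The plan is to reduce everything to Theorem~\ref{thm:valency stuff}, which has already done the structural work. That theorem tells us that under our hypotheses we have $2\leq c_2\leq k-1$, that $c_2=k-1$ forces $\Gamma\cong\gridcomp{(k+1)}$ with $G$ satisfying Condition~\ref{gridcondition} (part~(i)), and that $c_2=2$ forces $k=p^e\equiv 3\pmod 4$ (part~(ii)). So for each $k\in\{3,4,5\}$ the only real task is to eliminate every admissible value of $c_2$ except $c_2=k-1$, after which part~(i) delivers the conclusion immediately. The arithmetic input I would use to rule out intermediate values is Lemma~\ref{lem:sizeofgamma2}, which gives $k(k-1)=c_2\,|\Gamma_2(u)|$ and hence forces $c_2$ to divide $k(k-1)$.

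For $k=3$ there is nothing to eliminate: the range $2\leq c_2\leq k-1$ collapses to $c_2=2=k-1$, so Theorem~\ref{thm:valency stuff}(i) gives $\Gamma\cong\gridcomp 4$ directly. For $k=4$ the admissible values are $c_2\in\{2,3\}$; the value $c_2=2$ is excluded by Theorem~\ref{thm:valency stuff}(ii), since that would require $4\equiv 3\pmod 4$, which is false. Hence $c_2=3=k-1$, and part~(i) yields $\Gamma\cong\gridcomp 5$.

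The case $k=5$ is the only one that needs a genuinely separate argument, and it is the step I expect to be the main (though still mild) obstacle, since here the admissible range $c_2\in\{2,3,4\}$ contains a value, namely $c_2=3$, that is not covered by either extreme case of Theorem~\ref{thm:valency stuff}. I would dispose of $c_2=2$ exactly as before, using Theorem~\ref{thm:valency stuff}(ii) and the fact that $5\equiv 1\pmod 4$. To kill $c_2=3$ I would invoke Lemma~\ref{lem:sizeofgamma2}: it would force $3$ to divide $k(k-1)=5\cdot 4=20$, which is impossible. This leaves only $c_2=4=k-1$, so Theorem~\ref{thm:valency stuff}(i) gives $\Gamma\cong\gridcomp 6$ with $G$ satisfying Condition~\ref{gridcondition}, completing the proof.

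I note that for the two prime valencies $k=3$ and $k=5$ one could instead quote Corollary~\ref{thm:primeval} directly: its part~(i) already pins $c_2$ into the range $2\leq c_2\leq(p-1)/2$ (empty for $p=3$, and equal to $\{2\}$ for $p=5$), and its part~(ii) then excludes $c_2=2$ for $p=5$ by the same congruence obstruction. I would still handle $k=4$ by hand as above, since $4$ is not prime and so falls outside the scope of Corollary~\ref{thm:primeval}.
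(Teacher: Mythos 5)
Your proof is correct and takes essentially the same route as the paper: reduce to Theorem~\ref{thm:valency stuff}, then eliminate every value $2\leq c_2\leq k-2$, using the congruence $k\equiv 3\pmod 4$ from Lemma~\ref{lem:c_2=2} to kill $c_2=2$ and a divisibility constraint to kill $c_2=3$ when $k=5$. The only cosmetic difference is that you obtain $c_2\mid k(k-1)=20$ directly from Lemma~\ref{lem:sizeofgamma2}, whereas the paper quotes Corollary~\ref{thm:primeval} (which is itself proved from that same lemma) --- an equivalence you yourself point out in your closing remark.
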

\begin{proof}
  We claim that $c_2=k-1$ in all cases.
  By Theorem \ref{thm:valency stuff},
  $c_2\leq k-1$. If $k=3$, then $c_2\geq 2=k-1$
  follows from the girth condition, and so $c_2=k-1$.
  If $k\in\{4,5\}$ and $c_2\leq k-2$, then  we must have that $c_2=2$ (use Corollary~\ref{thm:primeval}
    for $k=5$). Hence, by Lemma \ref{lem:c_2=2},   $k \equiv 3 \pmod{4}$:
    a contradiction, as $k\in\{4,5\}$. Now the rest follows from
    Theorem~\ref{thm:valency stuff}(i).
\end{proof}



\begin{proposition}\label{2dtval4-girth3}
  Let $\Gamma$ be a connected
  $(G,2)$-distance transitive graph of girth $3$
and valency $4$ or $5$,
  and let $u\in V\Gamma$. Then one of the following is valid.
  \begin{enumerate}
\item $\Gamma$ is the octahedron and either  $G=S_2\wr S_3$ or $G$ is an index $2$ subgroup of $S_2\wr S_3$  and $G$ projects onto $S_3$;
\item $\Gamma\cong \Hamming(2,3)$ and either  $G=S_3\wr S_2$ or $G$ is an index $2$ subgroup of $S_3\wr S_2$  and $G$ projects onto $S_2$;
  \item $|\Gamma_2(u)|=8$ and $\Gamma$ is the line graph of a connected cubic $(G,3)$-arc transitive graph;
\item $\Gamma$ is the icosahedron and  $G = A_5$ or $A_5 \times S_2$.
  \end{enumerate}
  In cases (i)--(iii), the valency of $\Gamma$ is $4$, while
  in case~(iv), the valency is $5$.
\end{proposition}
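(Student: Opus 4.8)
The plan is to exploit the local structure of a $(G,2)$-distance transitive graph $\Gamma$ of girth $3$: here the stabilizer $G_u$ must be transitive on $\Gamma(u)$, so the induced subgraph $[\Gamma(u)]$ is a vertex-transitive graph on $k$ vertices ($k\in\{4,5\}$) that is neither empty (since the girth is $3$, so $\Gamma(u)$ contains at least one edge) nor complete (else $\Gamma$ would be complete, contradicting non-completeness). First I would split into the two valency cases. For $k=5$ the only vertex-transitive graphs on $5$ vertices with $G_u$ acting transitively on both $\Gamma(u)$ and (via the $c_2$-structure) the $2$-element configurations of $\Gamma_2(u)$ are the cycle $C_5$ and its complement; analyzing the global consequences of $[\Gamma(u)]\cong C_5$ should force $\Gamma$ to be the icosahedron, with $\Aut\Gamma\cong S_2\times A_5$ by Lemma \ref{lem:Icosahedron}, whence $G\in\{A_5,A_5\times S_2\}$, giving case (iv).

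For $k=4$ I would invoke the geodesic-transitivity classification directly. The key observation is that a $(G,2)$-distance transitive graph of girth $3$ and valency $4$ is automatically $(G,2)$-geodesic transitive: with $c_2$ well-defined and $G_u$ transitive on $\Gamma(u)$ and on $\Gamma_2(u)$, the transitivity on $2$-geodesics (which are exactly the $2$-arcs lying in no triangle, i.e.\ landing in $\Gamma_2(u)$) follows from transitivity of $G_u$ on $\Gamma_2(u)$ together with the constancy of the intersection numbers. Hence Lemma \ref{2gt-val4} applies and gives $\Gamma\cong L(\K_4)$ or $\Gamma\cong L(\Sigma)$ for a connected cubic $(G,3)$-arc transitive $\Sigma$. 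Since $L(\K_4)$ is the octahedron, the first alternative yields case (i) via Lemma \ref{lem:Octahedron}. The remaining line graphs $L(\Sigma)$ either have $|\Gamma_2(u)|=8$, which is case (iii), or correspond to the degenerate small case; I would identify $\Hamming(2,3)=K_3\square K_3$ as the line graph of $K_{3,3}$ and check directly (as in the octahedron lemma) that its $2$-distance transitive but not $2$-arc transitive groups are exactly those in case (ii), with $G\leq S_3\wr S_2$ of index at most $2$ projecting onto $S_2$.

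The main obstacle I anticipate is cleanly separating case (ii) from case (iii): both arise among valency-$4$ girth-$3$ graphs, and the split is governed by the value of $|\Gamma_2(u)|$ and by whether the relevant cubic graph $\Sigma$ gives a line graph isomorphic to $\Hamming(2,3)$. The delicate bookkeeping is to verify that $\Hamming(2,3)$ really does satisfy the $(G,2)$-distance transitive hypothesis for the stated groups while \emph{failing} $(G,2)$-arc transitivity (which is automatic from the girth-$3$ half of Lemma \ref{lem:girth5}), and to confirm that the $|\Gamma_2(u)|=8$ branch is genuinely disjoint from the $\Hamming(2,3)$ branch. I would handle this by computing $|\Gamma_2(u)|$ from the intersection array in each subcase and matching against the line-graph structure, so that the four listed outcomes are exhaustive and mutually exclusive.
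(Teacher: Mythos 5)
Your valency-$4$ argument hinges on the claim that a $(G,2)$-distance transitive graph of girth $3$ and valency $4$ is \emph{automatically} $(G,2)$-geodesic transitive, because ``transitivity on $2$-geodesics follows from transitivity of $G_u$ on $\Gamma_2(u)$ together with the constancy of the intersection numbers.'' That inference is false whenever $c_2\geq 2$: transitivity of $G_u$ on $\Gamma_2(u)$ lets you match the \emph{endpoints} of two $2$-geodesics, but to match the geodesics themselves you additionally need $G_{u,w}$ to be transitive on the $c_2$ common neighbours in $\Gamma(u)\cap\Gamma(w)$, and nothing in the hypotheses supplies that. The failure occurs exactly inside the scope of the proposition. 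Take $\Gamma=\Hamming(2,3)$ and $G$ an index-$2$ subgroup of $S_3\wr S_2$ projecting onto $S_2$, as in case~(ii) (for instance the kernel of the homomorphism $(a,b)\tau^i\mapsto \mathrm{sgn}(a)\,\mathrm{sgn}(b)\,(-1)^i$, where $\tau$ swaps the coordinates; one checks directly that its point stabilizer is transitive on both $\Gamma(u)$ and $\Gamma_2(u)$). Then $|G|=36$ and $|G_u|=4$, which suffices for $G_u$ to act (regularly) on each of $\Gamma(u)$ and $\Gamma_2(u)$, both of size $4$; but there are $c_2\,|\Gamma_2(u)|=2\cdot 4=8$ two-geodesics issuing from $u$, so $G_u$ cannot be transitive on them, and $\Gamma$ is \emph{not} $(G,2)$-geodesic transitive. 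Consequently Lemma~\ref{2gt-val4} does not apply to this branch, and your plan cannot produce case~(ii): although $\Hamming(2,3)=L(\K_{3,3})$ is the line graph of a cubic graph, $\K_{3,3}$ has $72$ three-arcs, so it is $(G,3)$-arc transitive only for $|G|\geq 72$, whereas here $|G|=36$.

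This is precisely why the paper does not argue via geodesic transitivity across the board. It splits on the isomorphism type of $[\Gamma(u)]$, which is $C_4$ or $2\K_2$: when $[\Gamma(u)]\cong C_4$ every arc extends to a unique $2$-geodesic ($|\Gamma_2(u)\cap\Gamma(v)|=1$), and when $[\Gamma(u)]\cong 2\K_2$ with $|\Gamma_2(u)|=8$ every $2$-geodesic is determined by its endpoints ($c_2=1$); only in these two subcases does $2$-distance transitivity upgrade for free to $2$-geodesic transitivity, yielding the octahedron (via \cite[Corollary 1.4]{DJLP-2}) and case~(iii) (via Lemma~\ref{2gt-val4}). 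The remaining subcase $[\Gamma(u)]\cong 2\K_2$ with $|\Gamma_2(u)|=4$ and $c_2=2$ is handled in the paper by a direct vertex-by-vertex reconstruction showing $\Gamma\cong\Hamming(2,3)$, with the group constraints then read off from divisibility ($4$ divides $|G_u|$, hence $36$ divides $|G|$). Your valency-$5$ sketch follows the same route as the paper ($[\Gamma(u)]\cong C_5$ forced by the parity of $5k/2$, then a local-to-global determination of the icosahedron and Lemma~\ref{lem:Icosahedron}), though the ``should force'' step is where essentially all the work lies. To repair the valency-$4$ half you must abandon the blanket implication and carry out the $|\Gamma_2(u)|=4$ reconstruction by hand, as the paper does.
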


\begin{proof}
  Suppose first that the valency is~4.  Since $\Gamma$ is
  $(G,2)$-distance transitive of valency  $4$ and girth $3$, it
  follows that the induced graph
  $[\Gamma(u)]$ is a vertex transitive graph with 4
  vertices of  valency $k$ where $1\leq k\leq 3$.  If $[\Gamma(u)]$
  has valency 3, then $[\Gamma(u)]$ is complete, and so $\Gamma$ is
  complete, which is a contradiction.  If $[\Gamma(u)]$ has valency 2, then
  $[\Gamma(u)]\cong C_4$.  Hence  $|\Gamma_2(u)\cap \Gamma(v)|=1$ for
  any arc $(u,v)$,  so $G_{u,v}$ is transitive on $\Gamma_2(u)\cap
  \Gamma(v)$, that is,  $\Gamma$ is $(G,2)$-geodesic transitive. Thus
  by   \cite[Corollary 1.4]{DJLP-2}, $\Gamma$ is the octahedron. It
  follows from Lemma \ref{lem:Octahedron} that either $G= S_2\wr S_3$,
  or $G$ is an index 2 subgroup of $S_2\wr S_3$ and $G$ projects onto
  $S_3$. Hence, case~(i) is valid.

Now suppose that  $[\Gamma(u)]$ has valency 1. Then $[\Gamma(u)]\cong
2\K_2$ and there are 8 edges between
$\Gamma(u)$ and $\Gamma_2(u)$. Further,   each arc lies in a unique
triangle. Let  $\Gamma(u)=\{v_1,v_2,v_3,v_4\}$ be such that
$(v_1,v_2)$ and $(v_3,v_4)$ are two arcs.  Then $|\Gamma_2(u)\cap
\Gamma(v_1)|=2$, say $\Gamma_2(u)\cap \Gamma(v_1)=\{w_1,w_2\}$.  Since
$[\Gamma(v_1)]\cong 2\K_2$, it follows that $v_2$ is adjacent to
neither $w_1$ nor $w_2$.
As $|\Gamma_2(u)\cap \Gamma(v_2)|=2$, we have $|\Gamma_2(u)|\geq 4$.
Since there are 8 edges
between $\Gamma(u)$ and $\Gamma_2(u)$ and since $G_u$ is transitive
on $\Gamma_2(u)$, we obtain that
$8\mid |\Gamma_2(u)|$, and so $|\Gamma_2(u)|\in\{4,8\}$.

Suppose first that $|\Gamma_2(u)|=4$.
As noted above, $v_2$ is not adjacent to $w_1$ or $w_2$.
Set $\Gamma_2(u)\cap\Gamma(v_2)=\{w_3,w_4\}$.
Then $\Gamma_2(u)=\{w_1,w_2,w_3,w_4\}$. 
 Since $[\Gamma(v_1)]\cong [\Gamma(v_2)]
  \cong 2\K_2$, it follows that $w_1,w_2$ are adjacent and, similarly, $w_3,w_4$
  are adjacent.
  Since $|\Gamma_2(u)|=4$ and there
  are 8 edges between $\Gamma(u)$ and $\Gamma_2(u)$, we
  must have $|\Gamma(u)\cap \Gamma(w_i)|=2$.  Since $v_2,w_1$ are nonadjacent,
$w_1$ is adjacent either to $v_3$ or to $v_4$, say $v_3$.  Then
$\Gamma(u)\cap \Gamma(w_1)=\{v_1,v_3\}$. As each arc lies in a unique
triangle and $(v_1,w_1,w_2)$ is a triangle, it follows that $v_3$ is
not adjacent to $w_2$. Hence   $v_3$ is adjacent to either
$w_3$ or $w_4$, say $w_3$. Then $\Gamma(v_3)=\{u,v_4,w_1,w_3\}$. Since
$[\Gamma(v_3)]\cong 2\K_2$ and $u,v_4$ are adjacent, it follows that
$w_1,w_3$ are adjacent. Thus,
$\Gamma(w_1)=\{v_1,w_2,v_3,w_3\}$. Finally, as $|\Gamma_2(u)\cap
\Gamma(v_4)|=2$ and $v_4$ is  adjacent to neither $w_1$ nor $w_3$,
$v_4$ is adjacent to both $w_2$ and $w_4$. Since $[\Gamma(v_4)] \cong
2\K_2$ and $(v_3,u,v_4)$ is a triangle, it follows that  $w_2$, $w_4$
are  adjacent. Now, the graph $\Gamma$ is completely determined
and  $\Gamma\cong \Hamming(2,3)$.
By \cite[Theorem 9.2.1]{BCN},  $\Gamma$ is $(\Aut\Gamma,2)$-distance
transitive where $\Aut\Gamma\cong S_3\wr S_2$. Suppose that $G$ is a
proper subgroup of $\Aut\Gamma$.  Since $G_u$ is transitive on
$\Gamma(u)$ and $|\Gamma(u)|=4$,  $|G_u|$ is divisible by 4, so $|G|$
is divisible by $4|\VGamma|=36$. It follows that
$|G|=36$, so $G$ is an index $2$ subgroup of $S_3\wr S_2$. Finally, as
$G_u$ is transitive on $\Gamma(u)$, $G_u$ projects onto $S_2$.
Thus~(ii) is valid.

Let us now consider the case when  $|\Gamma_2(u)|=8$.  Then for
each $z\in \Gamma_2(u)$, there is a unique 2-geodesic between $u$ and
$z$. Hence there is a one-to-one correspondence between the set of
2-geodesics starting from $u$ and the set of vertices in
$\Gamma_2(u)$.  Since $G_u$ is transitive on $\Gamma_2(u)$, it follows
that $G_u$ is transitive on the set of 2-geodesics starting from $u$,
so $\Gamma$ is $(G,2)$-geodesic transitive. Therefore by  Lemma
\ref{2gt-val4}, $\Gamma$ is the line graph of a connected cubic
$(G,3)$-arc transitive graph. Therefore~(iii) is valid.


Assume now that the valency is 5. Let $(u,v)$ be an arc. Since
$\Gamma$ is $G$-arc transitive, the induced subgraph $[\Gamma(u)]$ is
vertex transitive.   As $\Gamma$ has girth 3 and non-complete, the
valency $k$ of $[\Gamma(u)]$ is   at most $3$.  Since  $[\Gamma(u)]$
is  undirected, it follows that  $[\Gamma(u)]$ has $5k/2$ edges, and
so $k$ is even; that is, $k=2$.  Thus  $[\Gamma(u)]\cong C_5$.

Set  $\Gamma(u)=\{v_1,v_2,v_3,v_4,v_5\}$ with $v_1=v$ and assume
$(v_1,\ldots,v_5)$ is a 5-cycle.  Then $|\Gamma_2(u)\cap
\Gamma(v_1)|=2$ and say $\Gamma_2(u)\cap
\Gamma(v_1)=\{w_1,w_2\}$. Then $\Gamma(v_1)=\{u,v_2,v_5,w_1,w_2\}$. As
$[\Gamma(v_1)]\cong C_5$ and $(v_2,u,v_5)$ is a 2-arc, it follows that
$w_1,w_2$ are adjacent, $v_2$ is adjacent to one of $w_1$ and $w_2$ and
$v_5$ is adjacent to the other. Without loss of generality, assume
$v_2$ is adjacent to  $w_1$ and $v_5$ is adjacent to  $w_2$. In
particular, $v_2$ and  $w_2$ are not adjacent.  Moreover,  $2\leq
c_2\leq 4$. Since there are 10 edges  between $\Gamma(u)$ and $
\Gamma_2(u)$, we have $10=c_2|\Gamma_2(u)|$,   so $c_2=2$ and
$|\Gamma_2(u)|=5$.

Since
$|\Gamma_2(u)\cap \Gamma(v_2)|=2$, there exists $w_3$ in $\Gamma_2(u)$ which is adjacent to $v_2$, and so
$\Gamma(v_2)=\{u,v_1,v_3,w_1,w_3\}$. Note that $(w_1,v_1,u,v_3)$ is a 3-arc, and as $[\Gamma(v_2)]\cong C_5$, it follows that $w_3$ is adjacent to both $v_3$ and $w_1$.
Since $G_u$ is transitive on $\Gamma_2(u)$, $[\Gamma_2(u)]$ is a vertex transitive graph. Recall that  $w_1$ is adjacent to $w_2$ and $w_3$. It follows that
$[\Gamma_2(u)]\cong C_5$. Thus $|\Gamma_3(u)\cap \Gamma(w_1)|=1$, say $\Gamma_3(u)\cap \Gamma(w_1)=\{e\}$. Then $(v_1,w_1,e)$ and $(v_2,w_1,e)$ are two 2-geodesics. As $c_2=2$, $|\Gamma(v_1)\cap \Gamma(e)|=|\Gamma(v_2)\cap \Gamma(e)|=2$. Hence $\{w_1,w_2,w_3\}\subseteq \Gamma_2(u)\cap \Gamma(e)$.

Since $|\Gamma_2(u)\cap \Gamma(v_3)|=2$, there exists $w_4(\neq w_3)\in \Gamma_2(u)$ such that $v_3,w_4$ are adjacent.
Noting that $\Gamma(u)\cap \Gamma(w_1)=\{v_1,v_2\}$ and $\Gamma(u)\cap \Gamma(w_2)=\{v_1,v_5\}$, we find  $w_4\notin \{w_1,w_2,w_3\}$. Since
$[\Gamma(v_3)]\cong C_5$ and $(w_3,v_2,u,v_4)$ is a 3-arc, it follows that $w_4$ is adjacent to both $v_4$ and $w_3$.
As
$(v_3,w_3,e)$ is a 2-geodesic, $|\Gamma(v_3)\cap \Gamma(e)|=2$, so $w_4\in \Gamma_2(u)\cap \Gamma(e)$.
Now  $(v_4,w_4,e)$ is a 2-geodesic, so $|\Gamma(v_4)\cap \Gamma(e)|=2$, hence $\Gamma_2(u)\cap \Gamma(v_4)\subset  \Gamma(e)$.
Let the remaining vertex of $\Gamma_2(u)$ be $w_5$. Since $|\Gamma(u)\cap \Gamma(w_5)|=2$, it follows that $w_5$ is adjacent to both $v_4,v_5$. Hence
$\Gamma_2(u)\cap \Gamma(v_4)=\{w_4,w_5\}\subset   \Gamma(e)$. Thus $\Gamma_2(u)=\Gamma(e)$, so $\Gamma_3(u)=\{e\}$.
Now we have completely determined the graph $\Gamma$, and this graph is the
icosahedron.  Finally, by Lemma \ref{lem:Icosahedron},  $G\cong S_2\times A_5$ or $A_5$.
\end{proof}

\begin{proof}[The proof of Theorem~\ref{thm:small val}]
  If the valency of $\Gamma$ is 2 or the girth is greater than 4,
  then $\Gamma$
    cannot be $(G,2)$-distance transitive,
    but not $(G,2)$-arc transitive.
    Hence the valency is at least 3.
    If the valency and the girth are both equal to
    3, then $\Gamma=\K_4$.
    Hence Theorem~\ref{thm:small val} follows from
    Proposition~\ref{lem:valency 3}, in the case of girth 4,
    and from Proposition~\ref{2dtval4-girth3} in the case of
    girth~4.
\end{proof}

\end{document}